\newtheorem{theorem}{Theorem}[section]
\newtheorem{lemma}[theorem]{Lemma}
\newtheorem{proposition}[theorem]{Proposition}
\newtheorem{corollary}[theorem]{Corollary}
\theoremstyle{definition}
\newtheorem{definition}[theorem]{Definition}
\newtheorem{example}[theorem]{Example}
\theoremstyle{remark}
\newtheorem{remark}[theorem]{Remark}
\numberwithin{equation}{section}
\def\LX{\mathfrak L(X) }
\def\TT{\mathbb T}
\def\NN{\mathbb N}
\def\RR{\mathbb R}
\def\QQ{\mathbb Q}
\def\veps{\varepsilon}
\newcommand{\fgd}[2]{F_{#1,#2}^{\mathbf g,\mathbf d}}
\newcommand{\fuv}{\fgd{u}{v}}
\newcommand{\fvw}{\fgd{v}{w}}
\newcommand{\fuw}{\fgd{u}{w}}
\newcommand{\fuu}{\fgd{u}{u}}
\newcommand{\fxx}{\fgd{x}{x}}
\newcommand{\fxy}{\fgd{x}{y}}
\newcommand{\fyx}{\fgd{y}{x}}
\newcommand{\ldens}{\underline{\textrm{dens}}}
\begin{document}

\title[ Rotated orbits ]{ Hypercyclic operators and rotated orbits with polynomial phases }

\author{F. Bayart}
\author{G. Costakis}
\email{Frederic.Bayart@math.univ-bpclermont.fr} \email{costakis@math.uoc.gr}
\address{Clermont Universit\'e, Universit\'e Blaise Pascal, Laboratoire de Math\'ematiques, BP 10448, F-63000 CLERMONT-FERRAND ---
CNRS, UMR 6620, Laboratoire de Math\'e\-matiques, F-63177 AUBIERE}
\address{Department of Mathematics
University of Crete Knossos Avenue GR-714 09 Heraklion, Crete Greece }
\thanks{}

\subjclass[2010]{Primary 47A16}

\date{}

\keywords{}

\begin{abstract}
An important result of Le\'on-Saavedra and M\"uller says that the rotations of hypercyclic operators remain
hypercyclic. We provide extensions of this result for orbits of operators which are rotated by unimodular
complex numbers with polynomial phases. On the other hand, we show that this fails for unimodular complex
numbers whose phases grow to infinity too quickly, say at a geometric rate. A further consequence of our work is
a notable strengthening of a result due to Shkarin which concerns variants of Le\'on-Saavedra and M\"uller's
result in a non-linear setting.
\end{abstract}

\maketitle

\section{Introduction}
Linear dynamical systems have surprising properties. By a linear dynamical system, we mean a couple $(X,T)$
where $X$ is a (complex) topological vector space and $T\in\LX$ is a bounded linear operator on $X$.  We are interested in transitive
linear dynamical systems; namely we ask that $T$ is hypercyclic: there exists $x\in X$, called a hypercyclic
vector for $T$, such that its $T$-orbit $\{T^n x;\ n\geq 0\}$ is dense in $X$. We shall denote by $HC(T)$ the
set of hypercyclic vectors for $T$. A comprehensive treatment on linear dynamical systems is carried out in the
recent books \cite{BM}, \cite{GP}.

Linear dynamical systems have rigid properties which are not shared by general dynamical systems. For instance, Ansari
has shown in \cite{A} that, for any hypercyclic operator $T$ and for any $q\geq
1$, $T^q$ is hypercyclic and $HC(T^q)=HC(T)$. Some years later, Le\'on and M\"uller were interested in the
rotations of a hypercyclic vector. They proved in \cite{LM} that $\{T^n x;\ n\geq 1\}$ is dense in $X$ iff
$\{\lambda T^n x;\ n\geq 1,\ \lambda\in\mathbb T\}$ is dense in $X$, where $\mathbb T$ is the unit circle. In
particular, for any $\theta\in\mathbb R$, $HC(T)=HC(e^{i\theta}T)$. For further results along this line of
research we refer to \cite{BG}, \cite{CMP}, \cite{LM}, \cite{LP}, \cite{M}, \cite{S}. \medskip

To understand the influence of ``rotations" in linear dynamics it would be desirable to fully answer the
following question.\medskip

{\bf Question.} Let $T\in \LX $ be a hypercylic operator and let $x\in X$. For which sequences $(\lambda_n )\subset\TT^\NN$ do
 we have $\overline{ \{ T^nx;\ n\geq 1 \} }=X$ if and only if
$\overline{ \{ \lambda_n T^nx;\ n\geq 1 \} }=X$?\medskip

Let us first show that an affirmative answer to the above question already imposes some restrictions to the
sequence $(\lambda_n)$. Suppose $T\in \LX $ is a hypercyclic operator and consider $x\in HC(T)$. Take any
non-zero functional $x^*\in X^*$ and define $\lambda_n= |x^*(T^nx)|/x^*(T^nx)$ if $x^*(T^nx)\neq 0$ and
$\lambda_n=1$ if $x^*(T^nx)= 0$. Observe that $\lambda_n\in \mathbb{T}$ for every $n\in \mathbb{N}$. We claim
that the set $\{ \lambda_nT^nx;\ n\geq 1 \}$ is not dense in $X$. To see this, we argue by contradiction and so
assume that $\{ \lambda_nT^nx;\ n\geq 1 \}$ is dense in $X$. Then, the set $\{ x^*(\lambda_nT^nx);\ n\geq 1 \}$
is dense in $\mathbb{C}$. However, the last is a contradiction since, by the construction of $\lambda_n$,
$x^*(\lambda_nT^nx)\geq 0$ for every $n\in \mathbb{N}$.

We may also obtain a counterexample if we require moreover that $(\lambda_n)$ belongs to a finite
subset of $\mathbb T$. Indeed, let $T\in \LX$ be a hypercyclic operator on a Banach space $X$ and take $x\in HC(T)$. Then define
$\lambda_n=1$ if $\| T^nx-x\| \geq \| x\| /2$ and $\lambda_n=-1$ if $\| T^nx-x\| < \| x\| /2$. It readily
follows that $\| \lambda_nT^nx-x\| \geq \| x\| /2$ for every $n\geq 1$.
\medskip

On the contrary, the Le\'on-Saavedra and M\"uller result shows that the answer is true provided
$(\lambda_n)=(e^{in\theta})$, $\theta\in\mathbb R$. Our main result says that the rotations of the orbit by
unimodular complex numbers with linear phases can be replaced by rotations consisting of unimodular complex
numbers with polynomial phases.

\begin{theorem} \label{mainthm}
Let $X$ be a complex topological vector space, let $T\in \LX$ and let $x\in X$. The following are equivalent.
\begin{itemize}
        \item [(i)] $x$ is hypercyclic for $T$.
        \item [(ii)] $\overline{ \{ e^{iP(n)}T^n x;\ n\geq 1\}}=X$ for any polynomial $P\in\mathbb R[t]$.
        \item [(iii)] $\overline{ \{ e^{iP(n)}T^n x;\ n\geq 1\}}^o\neq \emptyset$ for some polynomial $P\in\mathbb R[t]$.
\end{itemize}
\end{theorem}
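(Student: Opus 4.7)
The implications $(ii)\Rightarrow (i)$ (take $P\equiv 0$) and $(ii)\Rightarrow (iii)$ (trivial) are immediate. For $(iii)\Rightarrow (i)$, the plan is to use the inclusion
\[
\overline{\{e^{iP(n)}T^nx:\ n\geq 1\}}\subset \overline{\{\lambda T^nx:\ \lambda\in\mathbb T,\ n\geq 1\}},
\]
which forces the right-hand set to be somewhere dense. A somewhere-dense-implies-everywhere-dense statement for the circle-rotated orbit---in the spirit of Bourdon--Feldman and of the circle of ideas in \cite{CMP,LP}---then yields full density of the $\mathbb T$-rotated orbit, and one application of Le\'on--M\"uller gives $x\in HC(T)$.

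The substance of the theorem is $(i)\Rightarrow (ii)$, which I would prove by induction on $d:=\deg P$. The cases $d=0$ (trivial) and $d=1$ (Le\'on--M\"uller) are in hand. For the inductive step, write $P(n)=\alpha n^d+Q(n)$ with $\deg Q\leq d-1$ and split according to the arithmetic nature of $\alpha$.

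In the \emph{rational} case $\alpha=2\pi p/q$, the factor $e^{i\alpha n^d}$ is $q$-periodic in $n$. Parametrising $n=r+qm$ for $0\leq r<q$ gives
\[
e^{iP(n)}T^nx=e^{2\pi ipr^d/q}\cdot e^{i\widetilde Q_r(m)}(T^q)^m(T^rx),
\]
where $\widetilde Q_r(m):=Q(r+qm)$ has degree at most $d-1$ in $m$. By Ansari's theorem $T^rx\in HC(T^q)$, so the induction hypothesis applied to $T^q$ and $\widetilde Q_r$ yields density of each of the $q$ sub-orbits, hence of their union.

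The \emph{irrational} case $\alpha/2\pi\notin\mathbb Q$ is the main obstacle, since no periodicity is available to bring Ansari into play. Here the plan is to establish the stronger joint density
\[
\overline{\{(T^nx,e^{iP(n)}):\ n\geq 1\}}=X\times\mathbb T,
\]
from which $(ii)$ follows by absorbing the phase into an arbitrary target. I would realise $(e^{iP(n)})$ as the first coordinate of the orbit of $\mathbf 1$ under a suitable unipotent affine map $U$ on $\mathbb T^d$, and then try to show that $T\oplus U$ is hypercyclic at $(x,\mathbf 1)$. The top-level rotation of $U$ is by $d!\,\alpha$, which is irrational, so Le\'on--M\"uller handles that level, while the lower coordinates---governed by polynomials of strictly smaller degree---are controlled by the induction hypothesis together with the rational case just settled. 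The delicate point, which I expect to be the real technical work, is transferring hypercyclicity through this skew product uniformly in the polynomial data.
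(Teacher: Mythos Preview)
Your treatment of $(iii)\Rightarrow(i)$ coincides with the paper's: it proves the needed ``somewhere dense $\Rightarrow$ everywhere dense'' statement for the $\mathbb T$-rotated orbit (Theorem~\ref{THMBF}) and then invokes Le\'on--M\"uller, exactly as you propose.

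Your rational reduction in $(i)\Rightarrow(ii)$ is correct and is \emph{not} what the paper does. The paper never splits on the arithmetic of the leading coefficient; it works uniformly in an abelian compact group $G$, allowing $g$ to fail to be a generator, and proves directly that $1_G$ lies in the closure set $\fuv$ for every $v$ (Theorem~\ref{THMPREP}). Your Ansari-based periodicity argument is a pleasant alternative for that sub-case.

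The gap is in your irrational case. You correctly identify that the target is the joint density of $\{(T^nx,e^{iP(n)})\}$ in $X\times\mathbb T$, and you correctly encode $e^{iP(n)}$ via a unipotent affine map on $\mathbb T^d$; but you then write ``try to show that $T\oplus U$ is hypercyclic at $(x,\mathbf 1)$'' and call the transfer ``the real technical work''. That transfer \emph{is} the theorem. Two concrete problems:
\begin{itemize}
\item Shkarin/Le\'on--M\"uller give density of $\{(T^nx,g^n)\}$ for a \emph{group element} $g$. The orbit $U^n\mathbf 1$ is not of this form, so ``Le\'on--M\"uller handles that level'' does not apply as stated; one can only invoke it for the last coordinate $e^{in\,d!\,\alpha}$, not for the whole tower.
\item Your induction hypothesis is density of the \emph{rotated} orbit $\{e^{iQ(n)}T^nx\}$ for $\deg Q\le d-1$. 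To feed the skew product you need the stronger \emph{joint} statement that $(T^nx,e^{in\theta},e^{in^2\theta},\dots,e^{in^{d-1}\theta})$ accumulates at $(v,1,\dots,1)$ for every $v$; your hypothesis does not give this.
\end{itemize}
The paper closes exactly this gap via Theorem~\ref{THMPREP}: it strengthens the inductive statement to track \emph{all} powers $g^{n},g^{n^2},\dots,g^{n^{d}}$ simultaneously, and then runs a coset argument. One shows that each $\fxy$ is a coset of the closed subgroup $\fxx$, that $\fgd{u}{T^nu}=g^n\fuu$ for some fixed $g$, and finally one applies the \emph{degree-one} case (Shkarin) to find $n_k$ with $T^{n_k}u\to v$ and $g^{n_k}\to 1_G$, forcing $1_G\in\fuv$. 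This coset mechanism is the missing idea in your sketch; without it (or a genuine substitute) the irrational branch of your induction does not close.
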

The equivalence of (ii) and (iii) in this theorem is in fact an extension of Bourdon-Feldman's theorem, which
says that $\{ T^nx;\ n\geq 0\}$ is dense if and only if it is somewhere dense, see \cite{BF}. Actually, this
result holds even for the projective orbit i.e., if the set $\{ \lambda T^nx;\ \lambda \in \mathbb{C}, n\geq 0
\}$ is somewhere dense then it is everywhere dense. In this case $T$ is called supercyclic and $x$ is called
a supercyclic vector for $T$. The supercyclic version of Bourdon-Feldman's theorem will be used in the proof of Theorem
\ref{mainthm}. That (i) implies (ii) will depend on an extension of a result of Shkarin which has its own interest.

\medskip

The problem of rotations of hypercyclic vectors can also be studied for stronger forms of hypercyclicity.
We recall that the lower density of a set of natural numbers $A$ is defined by
$$
\ldens(A):=\liminf_{N\to\infty}\frac{\textrm{card}(A\cap [1,N])}{N}\,\cdot$$ An operator $T\in\LX$ is called
frequently hypercyclic provided there exists $x\in X$ such that, for any $U\subset X$ open and nonempty, the set
$\{n\in\NN;\ T^n x\in U\}$ has positive lower density. The vector $x$ is then called frequently hypercyclic for
$T$ and the set of $T$-frequently hypercyclic vectors will be denoted by $FHC(T)$.

It has be shown in \cite{BM} that frequent hypercyclicity is invariant under rotation: for any $\lambda\in\TT$,
$FHC(T)=FHC(\lambda T)$. Here is the polynomial version of this property, to be proved in Section 3 of the paper.
\begin{theorem}\label{mainthmfhc}
Let $X$ be a complex topological vector space, let $T\in\LX$ and let $x\in X$. The following are equivalent.
\begin{itemize}
        \item [(i)] $x\in FHC(T)$.
\item[(ii)] For any $P\in\mathbb R[t]$, for any $U\subset X$ open and nonempty,
$\{n\in\mathbb N;\ e^{iP(n)}T^n x\in U\}$ has positive lower density.
\item[(iii)] There exists $P\in\mathbb R[t]$ such that, for any $U\subset X$
open and nonempty, $\{n\in\mathbb N;\ e^{iP(n)}T^n x\in U\}$ has positive lower density.
\end{itemize}
\end{theorem}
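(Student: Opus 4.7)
The implications $(ii)\Rightarrow(i)$ (specialize to $P\equiv 0$, in which case (ii) is exactly the definition of $FHC$) and $(ii)\Rightarrow(iii)$ (retain any single polynomial) are immediate, so the content lies in proving $(i)\Rightarrow(ii)$ and $(iii)\Rightarrow(i)$.

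For $(i)\Rightarrow(ii)$, I would induct on $d=\deg P$. The base case $d\leq 1$ is the frequent-hypercyclicity analog of Le\'on--M\"uller: when $P(n)=\alpha n+\beta$,
\[
\{n\in\NN:e^{iP(n)}T^nx\in U\}=\{n:(e^{i\alpha}T)^nx\in e^{-i\beta}U\},
\]
which has positive lower density because $FHC(T)=FHC(e^{i\alpha}T)$. For $d\geq 2$, write $P(t)=\alpha t^d+Q(t)$ with $\deg Q<d$. In the rational case $\alpha=2\pi p/q\in 2\pi\QQ$ I would partition $\NN$ by residue classes modulo $q$: since $(qm+r)^d\equiv r^d\pmod{q}$ one has $\alpha(qm+r)^d\equiv\alpha r^d\pmod{2\pi}$, so
\[
e^{iP(qm+r)}T^{qm+r}x=e^{i\alpha r^d}e^{iQ(qm+r)}T^r(T^q)^m x,
\]
whose phase is a polynomial in $m$ of degree at most $d-1$. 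Combined with the frequent-hypercyclicity form of Ansari's theorem (so that $T^rx\in FHC(T^q)$ for every $r$, which follows from $FHC(T)=FHC(T^q)$ together with shift-invariance of lower density), the inductive hypothesis furnishes positive lower density inside each residue class, and summing over $r=0,\dots,q-1$ completes this subcase.

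The irrational case $\alpha/(2\pi)\notin\QQ$ is where I expect the main difficulty to lie. Here Weyl's theorem ensures that $\{P(n)\bmod 2\pi\}$ is equidistributed in $[0,2\pi)$, and the task reduces to the joint lower-density estimate
\[
\ldens\bigl(\{n:T^nx\in V,\ e^{iP(n)}\in J\}\bigr)>0
\]
for arbitrary open $V\subset X$ and arc $J\subset\TT$: given any target $U\subset X$, pick $y\in U$, choose $V\ni y$ open and an arc $J\ni 1$ small enough that $JV\subset U$, and observe that $T^nx\in V$ together with $e^{iP(n)}\in J$ forces $e^{iP(n)}T^nx\in U$. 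Establishing this joint estimate, which blends the frequent density of the orbit in $X$ with the Weyl equidistribution of the phase, is the genuinely new technical ingredient of the theorem; I expect it to be proved either by a frequent hypercyclicity criterion on a skew-product space or by adapting the machinery developed for Theorem~\ref{mainthm}.

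Finally, for $(iii)\Rightarrow(i)$: Theorem~\ref{mainthm} applied to $(iii)$ already yields $x\in HC(T)$, so only the upgrade to frequent hypercyclicity remains. Fix an open $V\subset X$, pick $V_0\subset V$ open and an arc $I\ni 1$ in $\TT$ with $IV_0\subset V$, and cover $\TT$ by finitely many rotates $\{\lambda_jI\}_{j=1}^L$. If both $e^{iP(n)}T^nx\in\lambda_jV_0$ and $e^{-iP(n)}\lambda_j\in I$, then $T^nx\in V$, so it suffices to find some $j$ for which
\[
\{n:e^{iP(n)}T^nx\in\lambda_jV_0\}\cap\{n:e^{-iP(n)}\lambda_j\in I\}
\]
has positive lower density. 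This is once more the joint density estimate, combining the $FHC$-like density furnished by $(iii)$ with the equidistribution of the polynomial phase; hence no ingredient beyond the crux already identified for $(i)\Rightarrow(ii)$ is required.
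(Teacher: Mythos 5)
The easy implications are handled correctly, and your structural observation for $(i)\Rightarrow(ii)$ — that one should induct on the degree, reduce to a lower-degree phase, and that the genuine issue is a ``joint density estimate'' mixing the frequency of the orbit with the equidistribution of the phase — is accurate as a diagnosis, but you leave the crux unproved. The rational-leading-coefficient subcase via residue classes works, but the irrational subcase (and really the whole thing) is exactly where the content is, and ``I expect it to be proved by a skew-product criterion or by adapting Theorem~\ref{mainthm}'' is not a proof. In fact the paper does not split into rational/irrational subcases at all; it proves a strengthened statement, Theorem~\ref{THMFHCSTRONG}, that tracks finitely many polynomials $P_1,\dots,P_r$ without constant term simultaneously, and inducts on $\max_l\deg P_l$. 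After shifting by a finite family of well-chosen integers $n_j$, the binomial identity $P_l(n_j+k)=P_l(n_j)+P_l(k)+Q_{l,j}(k)$ leaves a larger but lower-degree family $(Q_{l,j})$, to which the stronger inductive hypothesis applies. The technical input is Corollary~\ref{COREXTENSION} (the multi-polynomial Shkarin extension) together with Corollary~\ref{CORINVROT} (conjugation-invariance of the closure of $\{(e^{iP_1(n)},\dots,e^{iP_r(n)})\}$), which lets one choose the $n_j$ so that $e^{iP_l(m_j)}e^{iP_l(n_j)}T^{n_j}x$ lands in the desired neighbourhood — that is precisely what replaces your unestablished joint estimate.

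For $(iii)\Rightarrow(i)$ your reduction to ``find $j$ such that $\{n:e^{iP(n)}T^nx\in\lambda_jV_0\}\cap\{n:e^{-iP(n)}\lambda_j\in I\}$ has positive lower density'' is not adequate: those two families are indexed the same way, the first need not intersect the second in positive density for any single $j$, and you give no argument why it should. The paper avoids this entirely by a shift trick: cover $\TT$ by $\bigcup_j D(\lambda_j^{-1},\veps)$, set $I_j=\{k:e^{iP(k)}\in D(\lambda_j^{-1},\veps)\}$ so that $\bigcup_j I_j=\NN$, pick $n_j$ with $T^{n_j}W\subset\lambda_jV$, and let $A=\{k:e^{iP(k)}T^kx\in W\}$ (which has positive lower density by $(iii)$). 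Then $B=\bigcup_j(n_j+A\cap I_j)$ has positive lower density by Lemma~\ref{LEMROTFHC}, and one checks directly that $B\subset\{n:T^nx\in U\}$. The ingredient you are missing in both directions is thus the same: Lemma~\ref{LEMROTFHC}, which lets one shift the pieces $A\cap I_j$ independently and still preserve positive lower density, thereby sidestepping any need for a simultaneous estimate.
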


\medskip

In Section 4,
we investigate other choices of phases. We show that $\{e^{if(n)}T^n x;\ n\geq 1\}$ does not need to be dense
for all $x\in HC(T)$ provided $f$ goes sufficiently fast to infinity, for instance if $f$ has exponential growth. On the
contrary, we extend Theorem \ref{mainthm} to sequences which do not grow too quickly to infinity, for instance
for sequences like $f(n)=n^a\log^b (n+1)$. The polynomial case (Theorem \ref{mainthm}) plays a crucial role in this extension.

Finally, in Section 5, we study the link between the problem of rotations of hypercyclic vectors and the theory of uniformly distributed sequences.
In particular, we point out that the uniform distribution of $(f(n))$ is not sufficient to ensure that $\{e^{i f(n)}T^n x;\ n\geq 1\}$
is dense for any $x\in HC(T)$. Nevertheless, uniform distribution will be a useful tool to obtain generic statements.

\section{Proof of the main result and an extension of a theorem of Shkarin}
\subsection{The strategy}
As mentioned in the introduction, the equivalence between (i) and (ii) in Theorem \ref{mainthm} is already known
when $P$ is a polynomial of degree $1$. This was first done when $P(n)=2\pi n\theta$ with $\theta=\frac
pq\in\mathbb Q$, $q\geq 1$. Indeed, in that case, $e^{2\pi iqn\theta}T^{qn}x=T^{qn}x$ and Ansari has shown in
\cite{A} that $HC(T)=HC(T^q)$ for any $q\geq 1$. If $\theta$ does not belong to $\mathbb Q$, this result goes
back to the paper \cite{LM} by Le\'on-Saavedra and M\"uller: they showed that for any $\theta\in\mathbb R$,
$HC(e^{i\theta}T)=HC(T)$, namely that given $x\in HC(T)$ and any $y\in X$, there exists a sequence $(n_k)$ such
that $e^{2\pi i n_k\theta}T^{n_k}x\to y$. This was later improved by Shkarin in \cite{S} who proved the
following result (see also \cite{M} for a similar abstract result).

\begin{theorem} (Shkarin) \label{Shkarin}
Let $T$ be a hypercyclic continuous linear operator on a topological vector space $X$ and let $g$ be a generator
of a compact topological group $G$. Then $\{ (T^n x, g^n);\ n\geq 1 \}$ is dense in $X\times G$ for any $x\in
HC(T)$. \end{theorem}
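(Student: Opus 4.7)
The plan is a two-step reduction --- first to a compact abelian Lie group via Pontryagin duality, then to a torus via Ansari's theorem --- followed by an induction on the torus dimension with the Le\'on--Saavedra--M\"uller theorem as the base case.

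Fix $x \in HC(T)$, an open set $U \subset X$, and an open set $V \subset G$: the task is to find $n \geq 1$ with $T^n x \in U$ and $g^n \in V$. Since $G$ is compact monothetic it is abelian Hausdorff, and by Pontryagin duality $V$ contains a basic neighborhood $\{h : |\chi_i(h)-\chi_i(h_0)|<\varepsilon,\ i=1,\ldots,d\}$ for some $h_0 \in V$, characters $\chi_i \in \widehat G$, and $\varepsilon>0$. With $\lambda_i = \chi_i(g) \in \TT$ and $\mu_i = \chi_i(h_0)$, finding such an $n$ is equivalent to finding $n$ with $T^n x \in U$ and $|\lambda_i^n - \mu_i| < \varepsilon$ for each $i$, and $(\mu_1,\ldots,\mu_d)$ automatically lies in the closed subgroup $K \subset \TT^d$ generated topologically by $(\lambda_1,\ldots,\lambda_d)$. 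So we may assume $G = K \subset \TT^d$ is a compact abelian Lie group with generator $(\lambda_1,\ldots,\lambda_d)$. Writing $K \simeq F \times \TT^k$ with $F$ finite and $g = (g_F,g_T)$, the power $g^q = (e_F, g_T^q)$ (with $q=|F|$) has trivial finite part and $g_T^q$ still generates $\TT^k$; applying Ansari ($x \in HC(T^q)$) together with a decomposition into residue classes $\bmod\ q$ reduces to the torus case $G = \TT^k$.

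We induct on $k$. The case $k=1$ is exactly the strong form of Le\'on--Saavedra--M\"uller: for $x \in HC(T)$ and $\lambda \in \TT$ of infinite order, $\{(T^n x, \lambda^n) : n \geq 1\}$ is dense in $X \times \TT$. For the step $k-1 \to k$ with $(\lambda_1,\ldots,\lambda_k)$ a topological generator of $\TT^k$ (equivalently, $1,\alpha_1,\ldots,\alpha_k$ are $\QQ$-linearly independent when $\lambda_j = e^{2\pi i\alpha_j}$), any $(k-1)$-subtuple remains linearly independent and hence topologically generates the corresponding subtorus, so the inductive hypothesis yields density of $\{(T^n x, \lambda_1^n,\ldots,\lambda_{k-1}^n)\}$ in $X \times \TT^{k-1}$.

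The main obstacle is the inductive step: one cannot simply reapply Le\'on--Saavedra--M\"uller to rotate by $\lambda_k$, because the dynamical system on $X \times \TT^{k-1}$ supplied by the inductive hypothesis is no longer a linear operator. My approach is to work directly with the orbit closure $A \subset X \times \TT^k$, which is closed, invariant under $\tilde T(y,\mathbf h) = (Ty, (\lambda_1,\ldots,\lambda_k)\mathbf h)$, and projects densely onto $X \times \TT^{k-1}$. For each $\mu \in \TT$ set $A^\mu := \{(y,\nu) \in X \times \TT^{k-1} : (y,\nu,\mu) \in A\}$: each $A^\mu$ is closed, and the invariance $\tilde T_0(A^\mu) \subset A^{\lambda_k \mu}$, where $\tilde T_0(y,\nu) = (Ty, \lambda_1\nu_1,\ldots,\lambda_{k-1}\nu_{k-1})$, together with the minimality of rotation by $\lambda_k$ propagates the ``size'' of one fiber to all the others. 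Combining this with a Bourdon--Feldman somewhere-dense-implies-everywhere-dense argument (in the supercyclic form flagged in the paper) forces each $A^\mu$ to be all of $X \times \TT^{k-1}$, so $A = X \times \TT^k$ and the induction closes.
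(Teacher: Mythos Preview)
First, note that the paper does \emph{not} prove Theorem~\ref{Shkarin}: it is quoted from \cite{S} and used as a black box (it supplies the base case $d_1+\dots+d_p=1$ in the proof of Theorem~\ref{THMPREP}). So there is no ``paper's own proof'' to compare against here; I will evaluate your attempt on its own merits.

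Your reductions are sound. Passing through finitely many characters to land in a closed subgroup of $\TT^d$, and then stripping off the finite factor via Ansari, are both legitimate. The base case $k=1$ is indeed provable by the Le\'on--M\"uller coset technique, though calling it ``the strong form of LSM'' is a slight overstatement: what LSM literally prove is $HC(T)=HC(\lambda T)$, and the density of $\{(T^nx,\lambda^n)\}$ in $X\times\TT$ is precisely the $G=\TT$ instance of Shkarin's theorem. Still, the same subgroup argument yields it, so I will not count this as a gap.

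The genuine gap is the inductive step. You slice $A$ into fibers $A^\mu\subset X\times\TT^{k-1}$ and then assert that ``minimality propagates size'' and that a Bourdon--Feldman argument forces $A^\mu=X\times\TT^{k-1}$. Neither of these is justified. First, you never establish that any fiber $A^\mu$ is somewhere dense: the covering $\bigcup_\mu A^\mu=X\times\TT^{k-1}$ is by uncountably many closed sets, so Baire is unavailable, and the orbit itself meets each fiber $A^{\lambda_k^n}$ in a single point. Second, $\tilde T_0$ is not an open map, so the inclusion $\tilde T_0(A^\mu)\subset A^{\lambda_k\mu}$ does not propagate nonempty interior. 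Third, Bourdon--Feldman (even in its supercyclic form) concerns orbits of linear operators on topological vector spaces; $A^\mu$ is not an orbit, $\tilde T_0$ is not linear on the torus factor, and $X\times\TT^{k-1}$ is not a vector space.

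The argument that actually closes the induction --- and which is also the template the paper uses for its own generalization in Lemmas~\ref{subsem1}--\ref{hr} and the proof of Theorem~\ref{THMPREP} --- slices $A$ the \emph{other} way: set $F_{y,\nu}=\{\mu\in\TT:(y,\nu,\mu)\in A\}$. One shows $F_{x,1}$ is a closed subsemigroup of $\TT$, hence a subgroup $H$; that each $F_{y,\nu}$ is a coset of $H$; that the coset-valued map $(y,\nu)\mapsto F_{y,\nu}$ is locally constant on $HC(T)\times\TT^{k-1}$; and finally that connectedness of $HC(T)$ forces it to be constant, whence $\lambda_k\in H$ and $H=\TT$. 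Your fiberwise-over-$\mu$ approach does not give access to this algebraic structure.
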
 In particular, one may apply this result with $G=\TT$ and $g=e^{2\pi i \theta}$, which
generates $\TT$ provided $\theta\notin\mathbb Q$. This implies the Le\'on-Saavedra and M\"uller theorem since
for any $y\in X$ we may pick a sequence $(n_k)$ such that $T^{n_k}x\to y$ and $e^{2\pi in_k\theta}\to 1$.

This observation in the starting point of our investigations. We shall prove by induction a polynomial variant
of Shkarin's result. Roughly speaking, it will say that, for any polynomial $P\in\mathbb Z[t]$ and for any
$(x,y)\in HC(T)\times X$ there exists a sequence of positive integers $(n_k)$ such that $T^{n_k}x\to y$ and $g^{P(n_k)}\to
1_G$. At a first glance this seems weaker than Theorem \ref{Shkarin}, since we do not get the density of $\{
(T^n x,g^{P(n)});\ n\geq 1 \}$. And of course we can not do better because we do not require that $g$ is a
generator of $G$. This allows us to handle the case of a rational phase in the same process. Moreover, this
variant is exactly what is needed to deduce Theorem \ref{mainthm}. If we further assume that $\{ g^{P(n)};\ n\geq
1\}$ is dense in $G$, in which case $g$ is necessarily a generator of $G$, then we will be able to show the density
of $\{ (T^n x,g^{P(n)});\ n\geq 1 \}$. Thus, we will obtain the full extension of Shkarin's result in our setting.
\medskip

At this stage we need to introduce notations.
Let $G$ be an abelian compact topological group and let $p\geq 1$. For $\mathbf g=(g_1,\dots,g_p)\in G^p$, $\mathbf d=(d_1,\dots,d_p)\in\NN^p$,
and $u,v\in X$, we set
$$\fuv=\left\{h\in G;\ (v,h,1_G,\dots,1_G)\in N_{u}^{\mathbf g,\mathbf d}\right\}$$
where
$$N_{u}^{\mathbf g,\mathbf d}=\overline{\left\{(T^n u,g_1^{n^{d_1}},\dots,g_1^{n},g_2^{n^{d_2}},\dots,g_2^{n},\dots,g_p^{n^{d_p}},\dots,g_p^n);\ n\geq 1\right\}}.$$
Our variants of Shkarin theorem  read as follows.
\begin{theorem}\label{THMPREP}
Let $T$ be a hypercyclic continuous linear operator on a topological vector space $X$, let $p\geq 1$ and let $g_1,\dots,g_p$ be elements of a compact topological group $G$. Let also $\mathbf d=(d_1,\dots,d_p)\in\NN^p$.
Then for any $u\in HC(T)$ and any $v\in X$, $1_G\in \fuv$.
\end{theorem}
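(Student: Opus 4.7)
My plan is to prove Theorem~\ref{THMPREP} by induction on $D := \max_{1\leq i\leq p} d_i$. The base case $D=1$ follows from Shkarin's Theorem~\ref{Shkarin} applied to $T$ and the element $\mathbf g=(g_1,\dots,g_p)$ of the compact abelian group $G^p$: this $\mathbf g$ generates the closed subgroup $H:=\overline{\langle\mathbf g\rangle}\leq G^p$, which contains $1_{G^p}$, and the density of $\{(T^n u,\mathbf g^n):n\geq 1\}$ in $X\times H$ yields a sequence $(n_k)$ along which $T^{n_k}u\to v$ and $g_i^{n_k}\to 1_G$ for every $i$.

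A useful preliminary reduction is that we may assume $d_1=\dots=d_p=D$. Whenever $\mathbf d'\geq \mathbf d$ componentwise (same $\mathbf g$), the projection $N_u^{\mathbf g,\mathbf d'}\to N_u^{\mathbf g,\mathbf d}$ obtained by discarding the extra top-degree coordinates is continuous and sends $(v,1_G,\dots,1_G)$ to $(v,1_G,\dots,1_G)$, so proving the theorem for the padded tuple implies it for the original.

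For the inductive step, fix $\mathbf d=(D,\dots,D)$ and assume the theorem for every tuple of maximum entry strictly less than $D$. Applied with $\mathbf d^*=(D-1,\dots,D-1)$ and the same $(u,v,\mathbf g)$, the inductive hypothesis produces a sequence $(n_k)$ with $T^{n_k}u\to v$ and $g_i^{n_k^j}\to 1_G$ for all $i$ and all $1\leq j\leq D-1$; compactness of $G^p$ lets us extract further so that $g_i^{n_k^D}\to h_i\in G$ for each $i$. The remaining task is to modify $(n_k)$ so as to drive each $h_i$ to $1_G$ while preserving the lower-degree convergences.

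The main obstacle is precisely this last step. A naive shift $n_k\to n_k+m$ with fixed $m$ fails: a binomial expansion of $(n_k+m)^j$ combined with $g_i^{n_k^\ell}\to 1_G$ for $1\leq\ell<j<D$ gives $g_i^{(n_k+m)^j}\to g_i^{m^j}$ (the mixed terms $(g_i^{n_k^{j-\ell}})^{\binom{j}{\ell}m^\ell}$ tend to $1_G$, but the constant term $g_i^{m^j}$ survives), whereas $g_i^{(n_k+m)^D}\to h_i\,g_i^{m^D}$. I would bypass this by taking the shift from a variable sequence $(m_\ell)\to\infty$, produced by a second application of the inductive hypothesis, satisfying $g_i^{m_\ell^j}\to 1_G$ for $j<D$ and (after a further extraction using compactness of $G^p$) $g_i^{m_\ell^D}\to h_i^{-1}$. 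Here one exploits the fact that $\overline{\{(g_1^{m^D},\dots,g_p^{m^D}):m\geq 1\}}$ is a closed sub-semigroup of the compact abelian group $G^p$, hence a closed subgroup, which contains $(h_1,\dots,h_p)$ and consequently also $(h_1^{-1},\dots,h_p^{-1})$. A diagonal construction from the double sequence $(n_k+m_\ell)$---at each step re-selecting $n_k$ large enough, using the continuity of $T^{m_\ell}$ and the hypercyclicity of $u$, so that $T^{m_\ell}(T^{n_k}u)$ remains close to $v$---then produces a single sequence $(N_\ell)$ along which $T^{N_\ell}u\to v$ and all $g_i^{N_\ell^j}\to 1_G$ for $1\leq j\leq D$. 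The most delicate point, and the principal technical burden of the proof, is arranging the shift sequence $(m_\ell)$ so that the cancellations $g_i^{m_\ell^D}\to h_i^{-1}$ happen \emph{simultaneously} for all $i$ while the lower-degree phases still go to $1_G$, and then verifying that the diagonal extraction preserves all these convergences together with the approach of $T^{N_\ell}u$ to $v$.
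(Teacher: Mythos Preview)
Your overall induction scheme is reasonable, and the base case and the reduction to $d_1=\dots=d_p=D$ are fine. But the inductive step has two genuine gaps.

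\textbf{First gap.} The claim that $\overline{\{(g_1^{m^D},\dots,g_p^{m^D}):m\geq 1\}}$ is a closed subsemigroup of $G^p$ is false. Take $p=1$, $D=2$, $G=\TT$, $g_1=e^{2\pi i/5}$: the set $\{g_1^{m^2}:m\geq 1\}$ corresponds to the quadratic residues $\{0,1,4\}$ mod $5$, and $e^{2\pi i/5}\cdot e^{2\pi i/5}=e^{4\pi i/5}$ is not in it. The set that \emph{is} a subsemigroup is the set of possible top-degree limits $(k_1,\dots,k_p)$ along sequences that \emph{also} satisfy the lower-degree constraints $g_i^{m_\ell^j}\to 1_G$ for $j<D$; proving this is a semigroup already needs a binomial/diagonal argument of the Lemma~\ref{LEMDOUBLESUITE} type, not just the observation you give.

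\textbf{Second gap.} Even granting a shift sequence $(m_\ell)$ with $g_i^{m_\ell^j}\to 1_G$ for $j<D$ and $g_i^{m_\ell^D}\to h_i^{-1}$, your diagonal argument does not control the $T$-part. Fixing $m_\ell$ and letting $k\to\infty$ gives $T^{n_k+m_\ell}u=T^{m_\ell}(T^{n_k}u)\to T^{m_\ell}v$, not $v$; ``re-selecting $n_k$ using hypercyclicity of $u$'' to land near a $T^{m_\ell}$-preimage of $v$ destroys the group constraints on $n_k$ that you worked to obtain. The natural fix is to also demand $T^{m_\ell}u\to u$, but then you must show that $(h_1,\dots,h_p)$ lies in the set of top-degree limits attainable \emph{subject to this extra constraint}---and your $(h_i)$ came from a sequence with $T^{n_k}u\to v$, not $\to u$, so this is not automatic.

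This is exactly what the paper's machinery is designed for. The sets $\fuv$ encode the operator and group constraints \emph{together}; Lemma~\ref{subsem1} gives the multiplicativity $\fuv\fvw\subset\fuw$ and the limit property, so $\fuu$ is a closed subgroup and, for $x,y\in HC(T)$, each $\fxy$ is a single coset of $\fxx$. The paper then identifies one element $g\in G$ with $\fgd{u}{T^n u}=g^n\fuu$, and invokes the base case (Shkarin applied to the \emph{single} element $g$) to get $(n_k)$ with $T^{n_k}u\to v$ and $g^{n_k}\to 1_G$ simultaneously; Lemma~\ref{subsem1}(iii) then yields $1_G\in\fuv$. Your approach, if repaired, converges to this: the semigroup you really need is $\fuu$, and getting from ``$(h_i)$ is the top-degree limit along a $v$-approximating sequence'' to ``$(h_i^{-1})$ is attainable along a $u$-approximating sequence'' is precisely the coset argument.
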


\begin{theorem} \label{ExtensionShkarin}
Let $T$ be a hypercyclic continuous linear operator on a topological vector space $X$. Let $P\in\mathbb
Z[t]$ and let $g\in G$ be such that  $\{ g^{P(n)};n\geq 1 \}$ is dense in $G$. Then $\{ (T^n x,
g^{P(n)});\ n\geq 1 \}$ is dense in $X\times G$ for any $x\in HC(T)$.
\end{theorem}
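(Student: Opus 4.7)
The plan is to reduce Theorem \ref{ExtensionShkarin} to Theorem \ref{THMPREP} via a translation trick. Fix $(y,h) \in X \times G$ together with open neighborhoods $U$ of $y$ and $W$ of $h$; we must produce $N \geq 1$ with $T^N x \in U$ and $g^{P(N)} \in W$. By continuity of multiplication in $G$, there exist a neighborhood $W_0$ of $h$ and a neighborhood $W_1$ of $1_G$ with $W_0 \cdot W_1 \subset W$. The density of $\{g^{P(n)};n\geq 1\}$ in $G$ then lets us pick $m \geq 1$ with $g^{P(m)} \in W_0$. It now suffices to produce $n \geq 1$ with $T^{n+m} x \in U$ and $g^{P(n+m) - P(m)} \in W_1$, for then $N := n+m$ works.

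The algebraic observation is that since $P \in \mathbb{Z}[t]$, the polynomial $Q_m(t) := P(t+m) - P(m)$ belongs to $\mathbb{Z}[t]$, satisfies $Q_m(0)=0$, and has degree $d := \deg P$. Writing
\[
Q_m(n) = \sum_{k=1}^{d} c_k(m)\, n^k, \qquad c_k(m) \in \mathbb{Z},
\]
we obtain $g^{Q_m(n)} = \prod_{k=1}^{d} g_k^{n^k}$ where $g_k := g^{c_k(m)} \in G$. We now apply Theorem \ref{THMPREP} with $u := T^m x$, $v := y$, $\mathbf{g} := (g_1,\ldots,g_d)$, and $\mathbf{d} := (d,\ldots,d)$. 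Note that $T^m x \in HC(T)$ because $HC(T)$ is $T$-invariant (the orbit of $T^m x$ is a tail of the orbit of $x$ and so remains dense). The theorem produces a sequence $(n_j) \subset \NN$ along which $T^{n_j + m} x = T^{n_j}(T^m x) \to y$ and $g_k^{n_j^{\ell}} \to 1_G$ for every $k,\ell \in \{1,\ldots,d\}$. In particular $g_k^{n_j^{k}} \to 1_G$ for each $k$, so
\[
g^{Q_m(n_j)} = \prod_{k=1}^{d} g_k^{n_j^{k}} \longrightarrow 1_G,
\]
whence $g^{P(n_j+m)} = g^{P(m)} \cdot g^{Q_m(n_j)} \to g^{P(m)} \in W_0$. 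For $j$ large enough, $T^{n_j+m} x \in U$ and $g^{P(n_j+m)} \in W_0 \cdot W_1 \subset W$, so $N := n_j+m$ witnesses the density of $\{(T^n x, g^{P(n)}); n \geq 1\}$ in $X \times G$.

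Once Theorem \ref{THMPREP} is in hand, there is essentially no obstacle left: the only substantive step is the binomial expansion of $P(n+m)$ about $m$, which converts the polynomial phase on the shifted orbit $T^{\cdot + m}x$ into a product $\prod_k g_k^{n^k}$ of exactly the shape that Theorem \ref{THMPREP} is engineered to handle. The density hypothesis on $\{g^{P(n)}\}$ enters only to produce the translation $m$; Theorem \ref{THMPREP} on its own gives no information about which elements of $G$ are approximated by the phases $g^{P(n)}$, so some preliminary translation is necessary to aim the orbit at an arbitrary target $h \in G$.
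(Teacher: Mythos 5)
Your proof is correct and follows essentially the same route as the paper: apply Theorem~\ref{THMPREP} to the shifted vector $T^m x$, use the binomial expansion to convert the limits $g^{n_j^\ell}\to 1_G$ into $g^{P(n_j+m)}\to g^{P(m)}$, and finish with the density of $\{g^{P(m)};\ m\geq1\}$ in $G$. The only cosmetic difference is that you invoke Theorem~\ref{THMPREP} with $p=d$ auxiliary elements $g_k=g^{c_k(m)}$ via the polynomial $Q_m(t)=P(t+m)-P(m)$, whereas the paper applies it with the single element $g$ (i.e. $p=1$, $\mathbf d=(d)$) and then uses the binomial expansion of $(n_k+m)^j$ directly (Lemma~\ref{LEMDOUBLESUITE}); both packagings are equivalent.
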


The proof of (iii) implies (i) in Theorem \ref{mainthm} will need the following variant of the Bourdon-Feldman theorem.

\begin{theorem} \label{THMBF}
Let $T\in\LX$ and let $x\in X$. If $\overline{ \{ \lambda T^nx;\ \lambda\in \mathbb{T}, n\geq 0 \} }^o\neq \emptyset$
then $\overline{ \{ \lambda T^nx;\ \lambda\in \mathbb{T}, n\geq 0 \} }=X$.
\end{theorem}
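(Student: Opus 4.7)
The plan is to prove Theorem \ref{THMBF} in two stages: first, reduce to the case where $x$ is supercyclic via the projective version of Bourdon--Feldman; second, adapt the Bourdon--Feldman topological machinery to the $\mathbb{T}$-equivariant setting in order to upgrade somewhere density of the $\mathbb{T}$-orbit to full density.

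For the first stage, the trivial containment
\[
\overline{\{\lambda T^n x : \lambda \in \mathbb{T},\ n \geq 0\}} \subseteq \overline{\{\lambda T^n x : \lambda \in \mathbb{C},\ n \geq 0\}}
\]
shows that the hypothesis carries over to the projective orbit closure. Applying the supercyclic version of Bourdon--Feldman recalled just after Theorem \ref{mainthm}, we conclude that $\overline{\{\lambda T^n x : \lambda \in \mathbb{C},\ n \geq 0\}} = X$, i.e.\ $x$ is supercyclic for $T$.

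For the second stage, set $F := \overline{\{\lambda T^n x : \lambda \in \mathbb{T},\ n \geq 0\}}$ and $W := F^\circ \neq \emptyset$. The set $F$ is closed, $T$-invariant, and $\mathbb{T}$-invariant (each rotation $y \mapsto \mu y$ with $\mu \in \mathbb{T}$ is a homeomorphism of $X$ permuting the generating set), hence so is $W$. Note that the density of the $\mathbb{T}$-orbit of $x$ in $F$, together with the $\mathbb{T}$-invariance of $W$, already implies that the pure orbit $\{T^n x;\ n\geq 0\}$ itself meets $W$ for infinitely many $n$. I would then argue $F = X$ by contradiction, following the strategy of Bourdon--Feldman: assuming $F\neq X$, use supercyclicity to pick approximations $c_k T^{n_k} x \to y$ for a chosen $y \in X\setminus F$, and use the $\mathbb{T}$-invariance of $W$ to decouple the angular and radial parts of $c_k$, so that the convergence is captured inside $F$ by $\mathbb{T}$-scalars alone.

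The main obstacle is the second stage. The original Bourdon--Feldman argument and Feldman's projective refinement exploit that the admissible scalars form either $\{1\}$ or all of $\mathbb{C}^*$ and interact transparently with the $T$-dynamics. Restricting to the circle $\mathbb{T}$ controls only the angular direction, so the radial direction must be routed through supercyclicity, and this rerouting must be compatible with iteration by $T$. Arranging the topological bookkeeping so that the key Bourdon--Feldman principle --- that a proper, closed, $T$-invariant subset of $X$ with nonempty interior cannot contain a full supercyclic orbit --- transfers to this $\mathbb{T}$-equivariant setting, while exploiting both the $\mathbb{T}$-invariance of $W$ and the density granted by supercyclicity, is the main technical step that closes the proof.
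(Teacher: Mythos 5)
Your first stage matches the paper exactly: since $\overline{\{\lambda T^n x;\ \lambda\in\mathbb T,\ n\geq 0\}}^o\neq\emptyset$ a fortiori implies $\overline{\{\lambda T^n x;\ \lambda\in\mathbb C,\ n\geq 0\}}^o\neq\emptyset$, the supercyclic Bourdon--Feldman theorem yields that $x$ is supercyclic. Your observations about $F$ and $W$ being $T$- and $\mathbb T$-invariant, and about the pure orbit $\{T^n x\}$ meeting $W$, are also correct.

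The gap is in the second stage, and you in fact flag it yourself. You propose to ``use the $\mathbb T$-invariance of $W$ to decouple the angular and radial parts of $c_k$'' when $c_k T^{n_k}x\to y\notin F$, but $\mathbb T$-invariance controls only the angular component of $c_k$; it says nothing about the radii $|c_k|$, and the whole difficulty is that scaling $T^{n_k}x$ by $|c_k|$ may carry it out of $F$. Without an independent reason why $W$ (or $F$) should be stable under multiplication by positive reals, the contradiction argument does not close, and ``arranging the topological bookkeeping'' is not a step one can defer --- it is the theorem. What the paper supplies at this juncture is precisely Lemma~\ref{l2BF}: the interior of $\overline{\{\lambda T^n x;\ \lambda\in\mathbb T,\ n\geq 0\}}$ coincides with the interior of $\overline{\{\lambda T^n(\mu x);\ \lambda\in\mathbb T,\ n\geq 0\}}$ for every $\mu\neq 0$, which amounts to saying $W$ is invariant under scaling by $|\mu|$. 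That lemma is proved by a genuinely nontrivial device: pick $\rho$ with $\rho^m=\mu$ and $\rho U\cap U\neq\emptyset$ for some open $U\subset W$, then chain the overlaps $\rho^l U\cap \rho^{l+1}U\neq\emptyset$ through Lemma~\ref{l1BF} to walk from $x$ to $\mu x$ in $m$ steps. Once $W$ is $\mathbb C^*$-invariant, placing a supercyclic $z$ inside $W$ forces the whole projective orbit of $z$ into the $T$-invariant set $F$, hence $F=X$. Your sketch correctly identifies the target obstruction but offers no substitute for this root-extraction-and-overlap mechanism, so as written the proposal is a plan rather than a proof.
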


Before to prove Theorems \ref{THMPREP}, \ref{ExtensionShkarin} and \ref{THMBF} let us show how to deduce Theorem \ref{mainthm} from them.
For simplicity, throughout this section, we will assume that $X$ and $G$ are metrisable. The
same proofs work when $X$ or  $G$ are not metrisable, replacing everywhere sequences by nets.
To
show that (i) implies (ii), let $x\in HC(T)$, let $G=\mathbb T$ and let $P(n)=\theta_pn^p+\dots+\theta_1 n$ (we
may always assume that it has no constant term). Setting $g_k=e^{i\theta_k}$ and $d_k=k$ for $k=1,\dots,p$, we may apply Theorem
\ref{THMPREP}. In particular, given any $y\in X$, one may find a sequence $(n_k)_k$ such that
$$T^{n_k}x\to y,\ e^{in_k^p\theta_p}\to 1,\dots,e^{in_k\theta_1}\to 1.$$
This clearly implies that $e^{iP(n_k)}T^{n_k}x\to y$. It remains to
prove that (iii) implies (i). (iii) yields $\overline{ \{ \lambda T^nx;\ \lambda\in \mathbb{T}, n\geq 0 \}
}^o\neq \emptyset$ so, Theorem \ref{THMBF} gives $\overline{ \{ \lambda T^nx;\ \lambda\in \mathbb{T}, n\geq 0 \}
}=X$ and we conclude by the Le\'on-Saavedra and M\"uller theorem.

\subsection{Preparatory lemmas}
Let us turn to the proof of Theorem \ref{THMPREP}. We fix an operator $T$ acting on a topological vector space $X$
and a compact group $G$. We will need the following
elementary lemma.
\begin{lemma}\label{LEMDOUBLESUITE}
Let $g,h\in G$, $d\geq 1$, $m\geq 1$ and let $(n_k)$ be a sequence of integers such that
$$g^{n_k}\to 1_G,\ g^{n_k^2}\to 1_G,\dots, g^{n_k^{d-1}}\to 1_G,\ g^{n_k^d}\to h.$$
Then
$$g^{(n_k+m)^d}\to hg^{m^d}.$$
\end{lemma}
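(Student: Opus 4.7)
The natural approach is a direct binomial expansion combined with continuity of the group operations. The hypothesis gives us control on $g^{n_k^j}$ for every $j\in\{1,\dots,d\}$, and the binomial theorem expresses $(n_k+m)^d$ as an integer combination of exactly these powers (plus the constant $m^d$), so passing to the limit should be essentially mechanical.

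First I would write
$$(n_k+m)^d \;=\; m^d \;+\; n_k^d \;+\; \sum_{j=1}^{d-1}\binom{d}{j}m^{\,d-j}\,n_k^{\,j},$$
and correspondingly, using that $g$ (and all its powers, and all limits of powers of $g$) generate an abelian subgroup of $G$,
$$g^{(n_k+m)^d} \;=\; g^{m^d}\cdot g^{n_k^d}\cdot \prod_{j=1}^{d-1}\bigl(g^{n_k^{\,j}}\bigr)^{\binom{d}{j}m^{\,d-j}}.$$

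Next I would pass to the limit factor by factor. For each fixed $j\in\{1,\dots,d-1\}$ the exponent $\binom{d}{j}m^{d-j}$ is a fixed integer, and the map $x\mapsto x^c$ is continuous on $G$ for any fixed integer $c$; since $g^{n_k^{\,j}}\to 1_G$ by hypothesis, each factor in the product tends to $1_G$. The factor $g^{n_k^d}$ tends to $h$, again by hypothesis, while $g^{m^d}$ is constant. By joint continuity of the group multiplication, the product converges to $g^{m^d}\cdot h=h\,g^{m^d}$, which gives the claim.

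I do not foresee any real obstacle: the only point to keep in mind is that one must be entitled to rearrange factors so as to isolate $g^{m^d}$ and $h$, which is legitimate because every element appearing in the product lies in the closure of the cyclic subgroup $\langle g\rangle$, and this closure is abelian even when $G$ itself is not assumed to be so. Everything else reduces to the binomial identity and two continuity facts (continuity of $x\mapsto x^c$ and of multiplication).
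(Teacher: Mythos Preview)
Your proof is correct and is essentially identical to the paper's: both expand $(n_k+m)^d$ via the binomial theorem, write $g^{(n_k+m)^d}$ as the corresponding product, and let each factor converge using the hypotheses and continuity of the group operations. Your remark that one only needs the closure of $\langle g\rangle$ to be abelian is a nice observation, but in the paper's setting $G$ itself is already assumed abelian, so the point does not arise.
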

\begin{proof}
Write
\begin{eqnarray*}
g^{(n_k+m)^d}&=&\prod_{j=0}^d g^{n_k^j\binom djm^{d-j}}\\
&=&g^{n_k^d}\left(\prod_{j=1}^{d-1}\left(g^{n_k^j}\right)^{\binom dj m^{d-j}}\right)g^{m^d}\\
&\to&hg^{m^d}.
\end{eqnarray*}
\end{proof}

The sets $\fuv$ share some properties which are summarized below.
\begin{lemma} \label{subsem1}
Let $u,v,w\in X$, $p\geq 1$, $\mathbf g\in G^p$, $\mathbf d\in \NN^p$. The following hold.
\begin{itemize}
        \item[(i)] $\fuv$ is closed; $\fuv\subset \fgd{Tu}{Tv}$.
        \item[(ii)] $\fuv\fvw\subset\fuw$.
        \item[(iii)] Let $(v_k)\subset X$ and $(h_k)\subset G$ be such that $v_k\to v$, $h_k\to h$ and $h_k\in\fgd{u}{v_k}$. Then
        $h\in\fuv$.
\end{itemize}
\end{lemma}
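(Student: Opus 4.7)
The plan is to work directly from the sequential definition of $\fuv$, using continuity of $T$ and of the group operation, and to pay the price in part (ii) via the binomial expansion captured by Lemma \ref{LEMDOUBLESUITE}. In each case I produce a witnessing sequence of positive integers whose $X$-coordinate approaches the target vector and whose $G$-coordinates realise the prescribed convergences.

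For (i), closedness of $\fuv$ is immediate from the closedness of $N_u^{\mathbf g,\mathbf d}$: if $h_n\in\fuv$ and $h_n\to h$, then $(v,h_n,1_G,\dots,1_G)\to (v,h,1_G,\dots,1_G)$ and the latter still lies in the closed set $N_u^{\mathbf g,\mathbf d}$. For the inclusion $\fuv\subset\fgd{Tu}{Tv}$, take $h\in\fuv$ witnessed by a sequence $(n_k)$. The $G$-coordinates depend only on $n_k$ and are therefore unchanged; applying the continuous operator $T$ to the first coordinate gives $T^{n_k}(Tu)=T(T^{n_k}u)\to Tv$, so the very same sequence $(n_k)$ witnesses $h\in\fgd{Tu}{Tv}$.

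Part (iii) is a diagonal extraction. Each $h_k\in\fgd{u}{v_k}$ is witnessed by some sequence $(n_{k,\ell})_\ell$ with $T^{n_{k,\ell}}u\to v_k$ and the prescribed $G$-coordinate convergences. Using metrisability, I choose $\ell_k$ large enough that $T^{n_{k,\ell_k}}u$ is within $1/k$ of $v_k$, $g_1^{n_{k,\ell_k}^{d_1}}$ within $1/k$ of $h_k$, and every remaining $G$-coordinate within $1/k$ of $1_G$. Setting $N_k:=n_{k,\ell_k}$ and using $v_k\to v$, $h_k\to h$, the sequence $(N_k)$ witnesses $h\in\fuv$.

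For (ii), let $h_1\in\fuv$ and $h_2\in\fvw$ be witnessed by sequences $(n_k)$ and $(m_\ell)$. I show that the candidate $N_{k,\ell}:=n_k+m_\ell$, together with a suitable diagonal extraction, witnesses $h_1h_2\in\fuw$. Fix $\ell$ and let $k\to\infty$. Continuity of $T^{m_\ell}$ gives $T^{N_{k,\ell}}u=T^{m_\ell}(T^{n_k}u)\to T^{m_\ell}v$, and the binomial expansion
\begin{equation*}
g_i^{(n_k+m_\ell)^e}=\prod_{j=0}^{e}\bigl(g_i^{n_k^{j}}\bigr)^{\binom{e}{j}m_\ell^{e-j}}
\end{equation*}
together with the convergences $g_i^{n_k^{j}}\to 1_G$ in every slot except $g_1^{n_k^{d_1}}\to h_1$ forces the limit to equal $g_i^{m_\ell^{e}}$ in every slot except the top one for $g_1$, where it equals $h_1g_1^{m_\ell^{d_1}}$; this is exactly the content of Lemma \ref{LEMDOUBLESUITE} applied with $m=m_\ell$. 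Letting now $\ell\to\infty$, the convergences coming from $h_2\in\fvw$ produce $h_1h_2$ in the top slot, $1_G$ in every other slot, and $w$ as $X$-limit. A single diagonal choice $N_s:=N_{k_s,\ell_s}$ completes the argument. The main obstacle here is precisely the combinatorial bookkeeping of the cross terms $\binom{e}{j}n_k^jm_\ell^{e-j}$; once one sees that Lemma \ref{LEMDOUBLESUITE} isolates exactly this phenomenon, everything else reduces to a routine diagonal extraction.
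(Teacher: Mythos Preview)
Your proof is correct and follows essentially the same route as the paper. The paper phrases (ii) and (iii) in terms of open neighbourhoods (picking a single $m$ for $h_2$ and then a single $n$ for $h_1$ via Lemma~\ref{LEMDOUBLESUITE}) rather than full witnessing sequences with a diagonal extraction, but this is only a cosmetic difference; the substantive content in both cases is the binomial bookkeeping supplied by Lemma~\ref{LEMDOUBLESUITE}.
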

\begin{proof}
(i) is trivial. (ii) follows from Lemma \ref{LEMDOUBLESUITE}. Indeed, for $\mathcal O$ an open neighbourhood
of $1_G$ in $G$ and $h\in G$, let us denote
$$\mathcal O_h=(h.\mathcal O)\times\mathcal O\times\dots\times\mathcal O\subset G^{d_1+\dots+d_p}.$$
Let $h_1\in \fuv$, $h_2\in\fvw$ and let $W\times \mathcal O_{h_1h_2}$ be an open neighborhood of the point
$(w,h_1h_2,1_G,\dots,1_G)$ in $X\times G^{d_1+\dots+d_p}$. Since $h_2\in \fvw$, there exists $m\in\NN$ such that
$$T^m v\in W\textrm{ and }(g_1^{m^{d_1}},\dots,g_p^m)\in \mathcal O_{h_2}.$$
Let $V$ be an open neighbourhood of $v$ such that $T^m V\subset W$. Since $h_1\in \fuv$, an application
of Lemma \ref{LEMDOUBLESUITE} gives the existence of an integer $n$ satisfying
\begin{itemize}
\item $T^n u\in V\implies T^{n+m}u\in W$;
\item $g_1^{(n+m)^{d_1}}\in h_1h_2\mathcal O$;
\item $g_l^{(n+m)^k}\in\mathcal O$ provided $l=1$ and $k\leq d_1-1$ or $l\geq 2$ and $k\leq d_l$.
\end{itemize}
This shows that $h_1h_2\in \fuw$.

The proof of (iii) goes along the same lines. Let $V\times\mathcal O_h$ be an open neighbourhood of
$(v,h,1_G,\dots,1_G)$ in $X\times G^{d_1+\dots+d_p}$. There exists $k\geq 1$ such that $(v_k,h_k,1_G,\dots,1_G)\in V\times \mathcal O_h$ and
since $h_k\in\fgd{u}{v_k}$, there exists $n\geq 1$ such that $(T^n u,g_1^{n^{d_1}},\dots,g_p)\in V\times\mathcal O_h$. Thus,
$h\in\fuv$.
\end{proof}

For a proof of the following lemma see, for instance, \cite{HR}.
\begin{lemma} \label{hr}
A closed subsemigroup of a compact topological group is a subgroup.
\end{lemma}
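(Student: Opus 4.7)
My plan is to prove Lemma \ref{hr} by an elementary compactness argument: given a closed subsemigroup $S$ of the compact topological group $G$, I must show $1_G \in S$ and $s^{-1} \in S$ for every $s \in S$ (closure under products is given, so these two facts suffice). Both will be extracted by passing to limits of suitable positive powers of a single element, and the only subtle point is to keep the exponents in $\mathbb{N}$ throughout so that each term really belongs to $S$.

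For the identity, I would fix $s \in S$ and look at the positive powers $(s^n)_{n \geq 1} \subseteq S$. By compactness of $G$ (using sequences in the metrisable case, nets in general) there exist strictly increasing indices $n_1 < n_2 < \cdots$ with $s^{n_k} \to g \in G$. Continuity of inversion gives $s^{-n_k} \to g^{-1}$, and continuity of multiplication then yields
\[
s^{n_{k+1}-n_k} \;=\; s^{n_{k+1}} s^{-n_k} \;\longrightarrow\; g g^{-1} \;=\; 1_G.
\]
Since $n_{k+1}-n_k \geq 1$, every $s^{n_{k+1}-n_k}$ lies in $S$, and the closedness of $S$ forces $1_G \in S$.

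For inverses, the step just carried out in fact furnishes, for any $s \in S$, a sequence of positive integers $m_k$ with $s^{m_k} \to 1_G$; discarding the trivial case $s = 1_G$, I may assume $m_k \geq 2$ after passing to a subsequence, and then
\[
s^{m_k - 1} \;=\; s^{-1} s^{m_k} \;\longrightarrow\; s^{-1},
\]
with each $s^{m_k - 1} \in S$, so $s^{-1} \in \overline{S} = S$. The principal obstacle I anticipate is precisely the bookkeeping of keeping exponents strictly positive: because $S$ is only a semigroup a priori, one cannot invoke $s^0 = 1_G$, and the strict inequality $n_{k+1} > n_k$ (giving $n_{k+1}-n_k \geq 1$ rather than $\geq 0$) is exactly what keeps the argument inside $S$. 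Everything else is just continuity of the group operations combined with closedness.
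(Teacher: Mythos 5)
The paper does not prove Lemma \ref{hr}; it simply cites Hewitt--Ross \cite{HR}. Your self-contained argument is correct and is the standard elementary proof of this fact, so there is nothing to object to in content. Two small remarks. First, the step $s^{n_{k+1}}s^{-n_k}\to gg^{-1}$ uses joint continuity of multiplication together with the fact that $(s^{n_{k+1}})_k$ is a tail of the convergent sequence $(s^{n_k})_k$; this is all fine, but worth saying explicitly. Second, as you note in passing, the sequence version of the argument presupposes that $G$ is metrisable (or at least first countable). This matches the convention the paper adopts for the whole section, but in the truly general case the naive ``consecutive differences'' $n_{k+1}-n_k$ are not available for an arbitrary subnet, and one typically argues instead via the compact semigroup $\Gamma=\bigcap_{n\geq1}\overline{\{s^m;\ m\geq n\}}$, shows it is a group whose idempotent identity must equal $1_G$, and extracts $s^{-1}$ from the relation $s\Gamma=\Gamma$. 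Your handling of the bookkeeping that keeps all exponents $\geq 1$ (discarding indices with $m_k=1$ unless $s=1_G$) is exactly the right care to take and closes the only potential gap in the sequence version.
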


\subsection{Proof of Theorem \ref{THMPREP}}
We are now ready for the proof of Theorem \ref{THMPREP}. We proceed by induction on $d_1+\dots+d_p$. We first
assume that $d_1+\dots+d_p=1$ and let $g\in G$, $u\in HC(T)$, $v\in X$. Define $G_0=\overline{\{g^n;\ n\geq
0\}}$. $G_0$ is an abelian compact topological group and $g$ is a generator of $G_0$. By applying Shkarin's
result, $\{(T^n u,g^n);\ n\geq 1\}$ is dense in $X\times G_0$. In particular, $1_G\in \fuv$.

Suppose now that $d_1+\dots+d_p\geq 2$ and let $u\in HC(T)$, $v\in X$. We set $\mathbf d'=(d_1-1,\dots,d_p)$. We consider
any $x,y\in HC(T)$. By the induction hypothesis,
$1_G\in F_{x,y}^{\mathbf g,\mathbf d'}$. This yields, by compactness of $G$, that $\fxy$ is nonempty. In particular, Lemma \ref{subsem1}
tells us that $\fxx$ is a closed subsemigroup of $G$, hence a closed subgroup of $G$. Moreover, if we use again Lemma \ref{subsem1},
we observe that
$$\left\{
\begin{array}{rcl}
\fxx\fxy&\subset&\fxy\\[2mm]
\fxy\fyx&\subset&\fxx.
\end{array}\right.$$
Since $\fxy$ and $\fyx$ are both nonempty, $\fxy$ contains a coset of $\fxx$ and is contained in a coset of $\fxx$. Thus it is a coset of $\fxx$
(at this point, it is important to notice that we need that $G$ is abelian).

We apply this for $x=u$ and $y=Tu$: there exists $g\in G$ such that $\fgd{u}{Tu}=g\fuu$. Now, using again (i) and (ii) of Lemma \ref{subsem1}, we get
$$ \left\{
\begin{array}{l}
 \fgd{Tu}{T^2u}\supset \fgd{u}{Tu}\supset g\fuu\\[2mm]
\fgd{u}{T^2 u}\supset \fgd{u}{Tu}\fgd{Tu}{T^2 u}\supset g^2\fuu.
\end{array}\right.$$
Since $\fgd{u}{T^2 u}$ is a coset of $\fuu$, this in turn yields $\fgd{u}{T^2u}=g^2\fuu$.
Repeating the process, we obtain that, for any $n\geq 1$,
$\fgd u{T^n u}=g^n \fuu.$
Now, we use again the induction hypothesis, but for $d_1+\dots+d_p=1$. This gives a sequence $(n_k)$ such that $g^{n_k}\to 1_G$ and
$T^{n_k}u\to v$. By the last part of Lemma \ref{subsem1}, $1_G\in \fuv$, which achieves the proof of Theorem \ref{THMPREP}.

\subsection{Proof of Theorem \ref{ExtensionShkarin}}

Fix $x\in HC(T)$ and let $y\in X$, $m\in \mathbb{N}$. Let also $d$ be the degree of the polynomial $P$,
$d\geq 1$. Then $T^mx\in HC(T)$ and by Theorem \ref{THMPREP} there exists a sequence of positive integers
$(n_k)$ such that
$$T^{n_k}(T^mx)\to y,\ g^{n_k}\to 1_G,\ g^{n_k^2}\to 1_G, \ldots,\ g^{n_k^d} \to 1_G.$$ From the last we deduce
that $$T^{n_k+m}x\to y, \ g^{(n_k+m)^j}\to g^{m^j} \,\,\, \textrm{for every}\,\,\, j=0,1,\ldots ,d$$ and this in
turn implies $T^{n_k+m}x\to y$, $g^{P(n_k+m)}\to g^{P(m)}$. Thus,
$$(y, g^{P(m)})\in \overline{ \{ (T^n x, g^{P(n)});\ n\geq 1 \} } \ \textrm{for every pair} \ (y,m)\in X \times \mathbb{N}.$$
Since $\{ g^{P(m)};m\geq 1 \}$ is dense in $G$ the conclusion follows.

\subsection{An extension of the Bourdon-Feldman result}
The next series of lemmas will be used in the proof of Theorem \ref{THMBF}. This kind of approach has appeared
in \cite{CH1} and borrows ideas from \cite{P}.

\begin{lemma} \label{l1BF}
Let $x,y$ be vectors in $X$. If $$\overline{\{ \lambda T^nx;\ \lambda \in \mathbb{T}, n\geq 0\} }^o\cap
\overline{ \{ \lambda T^ny;\ \lambda \in \mathbb{T}, n\geq 0\}   }^o\neq \emptyset $$ then $$\overline{\{ \lambda
T^nx;\ \lambda \in \mathbb{T}, n\geq 0\}  }^o=\overline{\{ \lambda T^ny;\ \lambda \in \mathbb{T}, n\geq 0\} }^o.$$
\end{lemma}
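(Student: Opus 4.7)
My plan is to exploit the $T$- and $\mathbb T$-invariance of the closed sets $A_x := \overline{\{\lambda T^n x;\ \lambda \in \mathbb T,\ n \geq 0\}}$ and $A_y$ (defined analogously), together with openness; by symmetry it suffices to show $A_y^o \subset A_x^o$. First I would record the basic properties: $A_x$ is closed, $T A_x \subset A_x$ by continuity of $T$, and $\mu A_x = A_x$ for every $\mu \in \mathbb T$. Since scalar multiplication by any $\mu \in \mathbb T$ is a homeomorphism of $X$, the interior $A_x^o$ is also $\mathbb T$-invariant.

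Now fix $z \in A_x^o \cap A_y^o$ and an open neighborhood $W \subset A_x^o \cap A_y^o$ of $z$. Because $z \in A_y$, one may pick $\mu \in \mathbb T$ and $n \geq 0$ with $\mu T^n y \in W \subset A_x^o$. The $\mathbb T$-invariance of $A_x^o$ yields $T^n y = \bar\mu \cdot (\mu T^n y) \in A_x^o \subset A_x$, and the $T$- and $\mathbb T$-invariance of $A_x$ then give $\lambda T^{n+j} y \in A_x$ for all $j \geq 0$ and $\lambda \in \mathbb T$. Passing to closures,
\[
\tilde A := \overline{\{\lambda T^k y;\ \lambda \in \mathbb T,\ k \geq n\}} \subset A_x.
\]

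The key remaining step is $A_y^o \subset \tilde A$. Write $F_n := \bigcup_{0 \leq k < n}\{\lambda T^k y;\ \lambda \in \mathbb T\}$, which is closed as a finite union of compact circles; then $A_y = \tilde A \cup F_n$. If some $p \in A_y^o$ lay outside the closed set $\tilde A$, it would have an open neighborhood contained in $F_n$. But each circle $\{\lambda v;\ \lambda \in \mathbb T\}$ is nowhere dense in $X$: were it to have nonempty interior, the absorption property of $0$-neighborhoods would force every $w \in X$ to be a complex multiple of $v$, hence $X = \mathbb C v$, and a direct check shows that in this one-dimensional case $A_x^o$ is always empty, making the hypothesis of the lemma vacuous. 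Thus $F_n$ is nowhere dense, the contradiction yields $A_y^o \subset \tilde A \subset A_x$, and since $A_y^o$ is open we conclude $A_y^o \subset A_x^o$; the symmetric argument closes the proof. The main obstacle here is precisely this last step: controlling how the full projective orbit closure $A_y$ can exceed the tail closure $\tilde A$, which is handled by the nowhere-density of finitely many circles.
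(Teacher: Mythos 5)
Your proof is correct and follows essentially the same route as the paper: locate an orbit point in the common interior, use $T$- and $\mathbb T$-invariance of the closed orbit set to place a tail orbit closure inside the other set, and discard the finitely many initial circles because they form a nowhere dense set. The only difference is that you spell out (and correctly justify, including the degenerate one-dimensional case) the claim that a finite union of circles $\{\lambda v;\ \lambda\in\mathbb T\}$ has empty interior, which the paper simply asserts.
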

\begin{proof}
There exist $\alpha \in \mathbb{T}$ and a positive integer $k$ such that
$$\alpha T^kx\in \overline{\{ \lambda T^nx;\ \lambda \in \mathbb{T}, n\geq 0\} }^o\cap
\overline{ \{ \lambda T^ny;\ \lambda \in \mathbb{T}, n\geq 0\}   }^o.$$ From the last we deduce that $\overline{
\{ \lambda T^nx;\ \lambda \in \mathbb{T}, n\geq k \} }\subset \overline{ \{ \lambda T^ny;\ \lambda \in \mathbb{T},
n\geq 0\}  }$ and since $ \overline{ \{ \lambda T^nx;\ \lambda \in \mathbb{T}, n\leq  k  \} }^o=\emptyset$ the
inclusion
$$\overline{ \{ \lambda T^nx;\ \lambda\in \mathbb{T}, n\geq 0 \} }^o \subset \overline{ \{ \lambda T^ny;\ \lambda \in \mathbb{T}, n\geq 0\}  }^o$$
follows. Interchanging the roles of $x$ and $y$ in the previous argument we conclude the reverse inclusion and
we are done.
\end{proof}

\begin{lemma} \label{l2BF}
Let $x\in X$. For every non-zero
complex number $\mu$, $\overline{\{ \lambda T^nx;\ \lambda\in \mathbb{T}, n\geq 0\}
 }^o=\overline{ \{ \lambda T^n(\mu x);\ \lambda\in \mathbb{T}, n\geq 0\} }^o$.
\end{lemma}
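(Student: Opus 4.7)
The plan is to reduce the statement to a scale-invariance property of $\overline{A_x}^o$ and then invoke Lemma~\ref{l1BF}. Writing $\mu=re^{i\theta}$ with $r=|\mu|>0$, the map $\lambda\mapsto\lambda e^{i\theta}$ permutes $\mathbb{T}$, so $\{\lambda T^n(\mu x):\lambda\in\mathbb{T},\,n\geq 0\}=r\{\lambda T^nx:\lambda\in\mathbb{T},\,n\geq 0\}$. Multiplication by the nonzero scalar $r$ is a self-homeomorphism of $X$, commuting with closure and with the passage to the interior, so writing $A_x=\{\lambda T^nx:\lambda\in\mathbb{T},\,n\geq 0\}$ we obtain $\overline{\{\lambda T^n(\mu x):\lambda\in\mathbb{T},\,n\geq 0\}}^o=r\,\overline{A_x}^o$. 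The lemma thereby reduces to proving $\overline{A_x}^o=r\,\overline{A_x}^o$ for every $r>0$.

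If $\overline{A_x}^o$ is empty, both sides are empty and the lemma is immediate. Otherwise both $\overline{A_x}^o$ and $r\,\overline{A_x}^o$ are nonempty open sets, and I would apply Lemma~\ref{l1BF} with $y=\mu x$: the equality follows at once if we can exhibit a single point in their intersection. To locate a candidate I use that $A_x$ is dense in $\overline{A_x}$ and therefore meets the open set $\overline{A_x}^o$, yielding $\alpha T^kx\in\overline{A_x}^o$ for some $\alpha\in\mathbb{T}$ and $k\geq 0$; by $\mathbb{T}$-invariance of the interior, we even get $T^kx\in\overline{A_x}^o$.

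The main obstacle is producing the required element of $\overline{A_x}^o\cap r\,\overline{A_x}^o$ from this starting data, since naive rescaling of a sequence $\lambda_j T^{n_j}x\to v\in\overline{A_x}^o$ yields multipliers $r\lambda_j\notin\mathbb{T}$ and so does not return a point of $\overline{A_x}$. The intended way around this is through the supercyclic Bourdon--Feldman theorem of Peris: under the hypothesis $\overline{A_x}^o\neq\emptyset$, the enveloping set $\{\lambda T^nx:\lambda\in\mathbb{C}^*,\,n\geq 0\}$ is somewhere dense and hence everywhere dense in $X$. Combining this density with the $\mathbb{T}$-invariance of both $\overline{A_x}^o$ and $r\,\overline{A_x}^o$ should furnish an element of $A_x$ landing in $r\,\overline{A_x}^o$, after which Lemma~\ref{l1BF} closes the argument.
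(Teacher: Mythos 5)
Your reduction to $\mu=r>0$ via $A_{\mu x}=rA_x$ is correct, and you correctly identify that it suffices to show $\overline{A_x}^o\cap r\,\overline{A_x}^o\neq\emptyset$ and then invoke Lemma~\ref{l1BF}. The gap is in the last step, where the hedge ``should furnish'' signals the real difficulty. Density of the projective orbit $\{\lambda T^nx;\ \lambda\in\mathbb C^*,\,n\geq 0\}$ tells you that for any nonempty open $V$ there exist $s>0$, $\alpha\in\TT$, $n\geq 0$ with $s\alpha T^nx\in V$. Taking $V=r\,\overline{A_x}^o$ and using $\TT$-invariance of $V$, you get $T^nx\in (r/s)\,\overline{A_x}^o$ — but $s$ is not under your control, so you land in a scaled copy $(r/s)\,\overline{A_x}^o$ rather than in $r\,\overline{A_x}^o$, and you certainly do not get a point of $\overline{A_x}^o\cap r\,\overline{A_x}^o$. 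Nothing in the Bourdon--Feldman conclusion rules out $U$ and $rU$ being disjoint when $r$ is far from $1$, and that is exactly what Lemma~\ref{l1BF} needs as input.

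The paper sidesteps this by never trying to compare $\overline{A_x}^o$ and $r\,\overline{A_x}^o$ directly for arbitrary $r$. Instead, fix a nonempty open $U\subset\overline{A_x}^o$, and choose $m\geq 1$ and $\rho\in\mathbb C^*$ with $\rho^m=\mu$ and $\rho$ so close to $1$ that $\rho U\cap U\neq\emptyset$ (possible by continuity of scalar multiplication at any nonzero point of $U$). Since $\rho^l U\subset\overline{A_{\rho^l x}}^o$ for all $l\geq 0$ and $\rho^l U\cap\rho^{l+1}U=\rho^l(U\cap\rho U)\neq\emptyset$, Lemma~\ref{l1BF} chains to give $\overline{A_{\rho^l x}}^o=\overline{A_{\rho^{l+1}x}}^o$ for $l=0,\dots,m-1$, hence $\overline{A_x}^o=\overline{A_{\mu x}}^o$. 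In group-theoretic language, the set $S=\{s\in\mathbb C^*:\,\overline{A_x}^o=s\,\overline{A_x}^o\}$ is a subgroup containing a neighbourhood of $1$, hence all of $\mathbb C^*$; the $m$-th root of $\mu$ is just an explicit witness of this. This is more elementary than your route: it makes no use of Bourdon--Feldman (which the paper reserves for Theorem~\ref{THMBF} itself, where Lemma~\ref{l2BF} is one of the ingredients). To repair your proof, replace the final paragraph with this root-and-chain argument.
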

\begin{proof}
We first assume that $\overline{ \{ \lambda T^n(x);\lambda \in \mathbb{T}, n\geq
0 \} }^o\neq\emptyset$ and let $U$ be a nonempty open subset of $X$ such that $U\subset \overline{ \{ \lambda T^nx;\ \lambda\in \mathbb{T}, n\geq 0\} }^o$.
There exist a complex number $\rho $
and a positive integer $m$ such that
\begin{equation}
\rho U\cap U\neq\emptyset
\end{equation}
and
\begin{equation}
{\rho}^m=\mu .
\end{equation}
The inclusion $\rho^l U\subset \overline{ \{ \lambda T^n({\rho}^lx);\lambda \in \mathbb{T}, n\geq
0 \} }^o$ trivially holds for every non-negative integer $l$. Because of (2.1) we get
\[
\rho^l U\cap\rho^{l+1}U  \neq \emptyset\,\, \textrm{for every}\,\,l=0,1,2,\ldots .
\]
Hence, for every $l=0,1,\ldots ,m-1$
\[
\overline{ \{ \lambda T^n({\rho}^lx);\lambda \in \mathbb{T}, n\geq 0 \} }^o\cap \overline{ \{ \lambda
T^n({\rho}^{l+1}x);\lambda \in \mathbb{T}, n\geq 0 \}}^o \neq\emptyset
\]
and by (2.2) and Lemma \ref{l1BF} we conclude that $$ \overline{\{ \lambda T^nx;\ \lambda\in \mathbb{T}, n\geq
0\}
 }^o=\overline{ \{ \lambda T^n(\mu x);\ \lambda\in \mathbb{T}, n\geq 0\} }^o. $$

If we now assume that $\overline{ \{ \lambda T^n(x);\lambda \in \mathbb{T}, n\geq
0 \} }^o=\emptyset$, then we shall have $$\overline{ \{ \lambda T^n(\mu x);\lambda \in \mathbb{T}, n\geq
0 \} }^o=\emptyset$$ for any $\mu\neq 0$, otherwise
$$\overline{ \{ \lambda T^n(x);\lambda \in \mathbb{T}, n\geq
0 \} }^o=\overline{ \left\{ \lambda T^n\left(\frac1\mu \mu x\right);\lambda \in \mathbb{T}, n\geq 0 \right\}
}^o$$ would be nonempty.
\end{proof}

\begin{proof}[Proof of Theorem \ref{THMBF}]
Since $\overline{ \{ \lambda T^nx;\ \lambda\in \mathbb{T},n\geq 0 \} }^o\neq \emptyset$, Bourdon-Feldman's
theorem implies that $T$ is supercyclic, in fact $x$ is a supercyclic vector for $T$. By the density of
supercyclic vectors there exists a supercyclic vector $z$ for $T$ such that $z\in \overline{ \{ \lambda T^nx;
\lambda\in \mathbb{T},n\geq 0 \} }^o$. Applying Lemma \ref{l2BF} we get $\mu z\in \overline{ \{ \lambda T^nx;
\lambda\in \mathbb{T},n\geq 0 \}}^o$ for every non-zero complex number $\mu$. Since $z$ is supercyclic for $T$
and the set $\overline{ \{ \lambda T^nx;\ \lambda\in \mathbb{T},n\geq 0 \}}$ is $T$-invariant, the result
follows.
\end{proof}

\subsection{Consequences and remarks}

Theorem \ref{Shkarin} is a particular case of a more general result which can be found in \cite{S}. Let us recall that a continuous map $T:X\to X$,
where $X$ is a topological space, is universal provided there exists $x\in X$, called universal vector for $T$, such that $\{T^n x;\ n\geq 1\}$ is
dense in $X$. We denote by $\mathcal U(T)$ the set of universal vectors for $T$.

Theorem \ref{Shkarin} can be extended to nonlinear dynamical systems whose set of universal vectors satisfies connectedness assumptions. Precisely,  Shkarin has proved the following result:
\begin{quote}
Let $X$ be a topological space, let $T:X\to X$ be a continuous map and let $g$ be a generator of a compact topological group $G$.
Assume also that there is a nonempty subset $Y$ of $\mathcal U(T)$ such that $T(Y)\subset Y$ and $Y$ is path connected, locally path connected
and simply connected. Then the set $\{(T^n x,g^n);\ n\geq 1\}$ is dense in $X\times G$ for any $x\in Y$.
\end{quote}
Starting from this result and with exactly the same proof, we can get the following statement.
\begin{quote}
Let $X$ be a topological space, let $T:X\to X$ be a continuous map, let $g_1,\dots,g_p$ be elements of a compact topological group $G$ and let  $\mathbf d=(d_1,\dots,d_p)\in\NN^p$.
Assume also that there is a nonempty subset $Y$ of $\mathcal U(T)$ such that $T(Y)\subset Y$ and $Y$ is path connected, locally path connected
and simply connected. Then $X\times\{(1_G,\dots,1_G)\}\subset N_u^{\mathbf g,\mathbf d}$ for any  $u\in Y$.
\end{quote}

We shall need later a variant of Theorem \ref{ExtensionShkarin}, where we allow the use of several polynomials.
\begin{corollary}\label{COREXTENSION}
 Let $P_1,\dots,P_r$ be real polynomials and let $E$ be the closure of the set $\{e^{iP_1(n)},\dots,e^{iP_r(n)});\ n\geq1\}$.
Let $T\in\LX$ be hypercyclic and let $x\in HC(T)$. Then $\{(T^n x,e^{iP_1(n)},\dots,e^{iP_r(n)});\ n\geq 1\}$
is dense in $X\times E$.
\end{corollary}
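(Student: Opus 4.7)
My plan is to mimic the proof of Theorem \ref{ExtensionShkarin} but to package all of $P_1,\dots,P_r$ into a single application of Theorem \ref{THMPREP} using the product group $G=\TT^r$, so that one sequence of exponents $(n_k)$ controls all $r$ phases simultaneously.

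More precisely, I would set $d^{*}=\max_{j}\deg P_{j}$ and write
$$P_{j}(t)=\sum_{l=0}^{d^{*}}\theta_{j,l}t^{l}$$
(with $\theta_{j,l}=0$ whenever $l>\deg P_{j}$). For each $l=1,\dots,d^{*}$ I would take $g_{l}=(e^{i\theta_{1,l}},\dots,e^{i\theta_{r,l}})\in\TT^{r}$ and set $p=d^{*}$, $\mathbf d=(1,2,\dots,d^{*})$. Given any target $y\in X$ and any $m\geq 1$, $T^{m}x$ still belongs to $HC(T)$, so Theorem \ref{THMPREP} applied to $u=T^{m}x$ and $v=y$ furnishes a sequence $(n_{k})$ with $T^{n_{k}+m}x\to y$ and $g_{l}^{n_{k}^{a}}\to 1_{G}$ for every $1\leq a\leq l\leq d^{*}$. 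Reading off the $j$-th coordinate in $\TT^{r}$, this says $e^{i\theta_{j,l}n_{k}^{a}}\to 1$ for all admissible $j,l,a$.

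I would then invoke Lemma \ref{LEMDOUBLESUITE} one monomial at a time: with $g=e^{i\theta_{j,l}}$, $d=l$ and $h=1$ the hypotheses are exactly what I have just produced, and the conclusion reads $e^{i\theta_{j,l}(n_{k}+m)^{l}}\to e^{i\theta_{j,l}m^{l}}$. Multiplying these relations over $l=1,\dots,d^{*}$ (the constant term $e^{i\theta_{j,0}}$ is unaffected by the shift $n\mapsto n+m$) yields $e^{iP_{j}(n_{k}+m)}\to e^{iP_{j}(m)}$ for each $j$. Combined with $T^{n_{k}+m}x\to y$, this shows that the point $(y,e^{iP_{1}(m)},\dots,e^{iP_{r}(m)})$ lies in the closure of $\{(T^{n}x,e^{iP_{1}(n)},\dots,e^{iP_{r}(n)});\ n\geq 1\}$ for every $y\in X$ and every $m\geq 1$. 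Since the set $\{(e^{iP_{1}(m)},\dots,e^{iP_{r}(m)});\ m\geq 1\}$ is dense in $E$ by definition, a routine approximation then gives the desired density in $X\times E$.

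The only genuinely delicate point is the choice of $\mathbf d$: Lemma \ref{LEMDOUBLESUITE} requires $g^{n_{k}^{a}}\to 1_{G}$ for every $a\leq l$, not merely for $a=l$, so one has to feed Theorem \ref{THMPREP} with the full cascade $n_{k}^{1},n_{k}^{2},\dots,n_{k}^{l}$ attached to $g_{l}$. This is exactly what $\mathbf d=(1,2,\dots,d^{*})$ provides. Once that observation is made, the passage from one polynomial (Theorem \ref{ExtensionShkarin}) to $r$ polynomials is essentially mechanical; note in particular that no new abelian-group issue arises because $\TT^{r}$ is abelian.
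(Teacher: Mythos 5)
Your proposal is correct and follows essentially the same route as the paper: apply Theorem \ref{THMPREP} once to control $T^{n_k}(T^m x)\to y$ together with all the phase conditions simultaneously, and then transfer to $e^{iP_j(n_k+m)}\to e^{iP_j(m)}$ via Lemma \ref{LEMDOUBLESUITE}, exactly as in the proof of Theorem \ref{ExtensionShkarin}. The only (harmless) cosmetic difference is the bookkeeping: you package the degree-$l$ coefficients into $G=\TT^r$ with $\mathbf d=(1,\dots,d^*)$, whereas the paper in effect keeps $G=\TT$ and takes one element per coefficient $\theta_{j,p}$, each with the full tower up to $d$.
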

\begin{proof}
 Let $d=\max(\deg(P_1),\dots,\deg(P_r))$ and let us write $P_p(x)=\sum_{j=0}^d \theta_{j,p}x^j$
for $p=1,\dots,r$. Let $y\in X$, $m\geq 1$ and let $(n_k)$ be a sequence of integers such that
$$T^{n_k}(T^m x)\to y,\ e^{in_{k}^l \theta_{j,p}}\to 1\textrm{ for }1\leq j,l\leq d,\ 1\leq p\leq r.$$
Then, as in the proof of Theorem \ref{ExtensionShkarin}, $T^{n_k+m}\to y$ and $e^{iP_p(n_k+m)}\to e^{iP_p(m)}$
for any $p$ in $\{1,\dots,r\}$.
\end{proof}

This corollary is particularly interesting when the closure of $ \{(e^{iP_1(n)},\dots,e^{iP_r(n)});\ n\geq 1\}$
is equal to $\TT^r$. This a well-known phenomenon in the theory of uniformly distributed sequences.

\begin{definition}
 We say that the real polynomials $P_1,\dots,P_r$ are $\pi\mathbb Q$-independent provided for any $h\in\mathbb Z^r$,
$h\neq 0$, the polynomial $h_1P_1+\dots+h_rP_r$ does not belong to $\pi\QQ[t]$.
\end{definition}

A fundamental theorem in the theory of uniformly distributed sequences, see \cite{KN},
says that if $P_1,\dots,P_r$ is a $\pi\QQ$-independent family of real polynomials, then the sequence
$(P_1(n),\dots,P_r(n))$ is uniformly distributed modulo 1. Hence, $\{(e^{iP_1(n)},\dots,e^{iP_r(n)});\ n\geq 1\}$
is dense in $\TT^r$.

\begin{corollary}
 Let $P_1,\dots,P_r$ be real polynomials which are $\pi\QQ$-independent. Let $T\in\LX$ be hypercyclic
and let $x\in HC(T)$. Then $\{(T^n x,e^{iP_1(n)},\dots,e^{iP_r(n)});\ n\geq 1\}$
is dense in $X\times\TT^r$.
\end{corollary}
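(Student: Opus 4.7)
The plan is to observe that this statement is essentially an immediate consequence of Corollary \ref{COREXTENSION} combined with the classical Weyl-type theorem on uniform distribution of polynomial sequences, which is recalled in the paragraph preceding the statement.

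First I would invoke the fundamental theorem from the theory of uniformly distributed sequences (Kuipers--Niederreiter, \cite{KN}) as stated in the excerpt: if $P_1,\dots,P_r$ are $\pi\QQ$-independent real polynomials, then the sequence $(P_1(n)/(2\pi),\dots,P_r(n)/(2\pi))$ is uniformly distributed modulo $1$. In particular, the sequence $((e^{iP_1(n)},\dots,e^{iP_r(n)}))_{n\geq 1}$ is dense in $\TT^r$. Hence, if we denote (following the notation of Corollary \ref{COREXTENSION}) by $E$ the closure in $\TT^r$ of the set $\{(e^{iP_1(n)},\dots,e^{iP_r(n)});\ n\geq 1\}$, we have $E=\TT^r$.

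Next I would apply Corollary \ref{COREXTENSION} to the same polynomials $P_1,\dots,P_r$, the operator $T$ and the hypercyclic vector $x\in HC(T)$. That corollary gives that $\{(T^n x,e^{iP_1(n)},\dots,e^{iP_r(n)});\ n\geq 1\}$ is dense in $X\times E$. Since $E=\TT^r$ by the previous step, the density in $X\times\TT^r$ follows.

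There is no real obstacle here: the only non-trivial ingredient, the polynomial version of Shkarin's theorem, has already been packaged into Corollary \ref{COREXTENSION}; the $\pi\QQ$-independence hypothesis serves only to identify the closure $E$ with the full torus via the Weyl equidistribution theorem. Thus the proof is a one-line combination of these two results.
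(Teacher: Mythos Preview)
Your proposal is correct and matches the paper's approach exactly: the paper does not even give a separate proof of this corollary, since it is meant to follow immediately from Corollary~\ref{COREXTENSION} together with the Weyl-type equidistribution theorem recalled in the preceding paragraph, which identifies the closure $E$ with the full torus $\TT^r$.
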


\section{Rotations of frequently hypercyclic vectors}

This section is devoted to the proof of Theorem \ref{mainthmfhc}. We first need an elementary lemma on sets
with positive lower density. Its proof can be found e.g. in \cite[Lemma 6.29]{BM}.
\begin{lemma}\label{LEMROTFHC} Let $A\subset\NN$ have positive lower density. Let also $I_1,\dots, I_q\subset\NN$ with
$\bigcup_{j=1}^q I_j=\NN$, and let $n_1,\dots ,n_q\in\NN$. Then $B:=\bigcup_{j=1}^q
\left( n_j+A\cap I_j\right)$ has positive lower density.
\end{lemma}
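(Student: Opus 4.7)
The plan is to reduce the problem to a single pigeonhole step. Let $\delta = \ldens(A) > 0$ and let $M = \max_{1\leq j\leq q} n_j$. Since $\bigcup_j I_j = \NN$, for every integer $N \geq 1$ we have $A \cap [1,N] \subseteq \bigcup_{j=1}^q (A \cap I_j \cap [1,N])$, so by the pigeonhole principle there exists $j(N) \in \{1,\dots,q\}$ (depending on $N$) such that
\[
\big|A \cap I_{j(N)} \cap [1,N]\big| \geq \frac{|A \cap [1,N]|}{q}.
\]
Translating by $n_{j(N)}$ gives $n_{j(N)} + A \cap I_{j(N)} \cap [1,N] \subseteq B \cap [1, N+M]$, and translation is a bijection, so
\[
|B \cap [1, N+M]| \;\geq\; \big|A \cap I_{j(N)} \cap [1,N]\big| \;\geq\; \frac{|A \cap [1,N]|}{q}.
\]

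Setting $N' = N + M$ and dividing by $N'$, I obtain
\[
\frac{|B \cap [1,N']|}{N'} \;\geq\; \frac{|A \cap [1, N'-M]|}{q(N'-M)} \cdot \frac{N'-M}{N'}.
\]
Since $M$ is fixed and independent of $N'$, passing to the liminf as $N' \to \infty$ yields $\ldens(B) \geq \ldens(A)/q = \delta/q > 0$, which is the desired conclusion.

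There is no genuine obstacle here: the proof is essentially a one-line pigeonhole argument combined with the elementary fact that a bounded translation does not affect the liminf of the normalized counting function. The only mild subtlety is that the index $j(N)$ achieving the pigeonhole bound may vary with $N$, but this causes no problem because the lower bound on $|B \cap [1,N+M]|$ holds uniformly regardless of which $j$ is chosen.
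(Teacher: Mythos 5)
Your proof is correct. The paper does not give its own argument for this lemma (it simply cites \cite[Lemma~6.29]{BM}), and the pigeonhole route you take is the natural one and matches the standard proof: pick the index $j(N)$ carrying at least a $1/q$-fraction of $A\cap[1,N]$, shift by $n_{j(N)}\le M$ to land inside $B\cap[1,N+M]$, and note that the factor $(N'-M)/N'\to1$ is harmless for the $\liminf$, giving $\ldens(B)\ge\ldens(A)/q$. The only point worth spelling out explicitly (you flag it) is that the varying index $j(N)$ is irrelevant because the shift bound $n_{j(N)}\le M$ is uniform in $N$.
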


As recalled before, if $P_1,\dots,P_r$ is a $\pi\QQ$-independent family of real polynomials, then $\{(e^{iP_1(n)},\dots,e^{iP_r(n)});\ n\geq 1 \}$
is dense in $\TT^r$. We need a variant of this result when there exist relations between the polynomials.

\begin{proposition}
 Let $P_1,\dots,P_r$ be real polynomials without constant term. Assume that there exist $p\in\{1,\dots,r\}$,
integers $m$ and $(a_{j,k})_{\substack{p+1\leq j\leq r\\ 1\leq k\leq p}}$, polynomials $(R_j)_{p+1\leq j\leq r}$ in
$\pi\mathbb Z[t]$ so that
\begin{itemize}
 \item[(i)] $P_1,\dots,P_p$ are $\pi\QQ$-independent;
\item[(ii)] For any $j$ in $\{p+1,\dots,r\}$,
$$mP_j=a_{j,1}P_1+\dots+a_{j,p}P_p+R_j.$$
\end{itemize}
Then the closure of $\{(e^{iP_1(n)},\dots,e^{iP_r(n)});\ n\geq 0\}$ is equal to
$$\left\{(e^{i\theta_1},\dots,e^{i\theta_r});\ \forall j\geq p+1,\ m\theta_j=a_{j,1}\theta_1+\dots+a_{j,p}\theta_p\right\}.$$
\end{proposition}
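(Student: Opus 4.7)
Write $F$ for the closure on the left and $H$ for the set on the right; the plan is to prove $F=H$ by a two-sided inclusion. The central observation is that the rescaled family $P_1/m,\dots,P_p/m$ inherits $\pi\QQ$-independence from $P_1,\dots,P_p$, since for $h\in\mathbb{Z}^p$ one has $\sum_k h_k(P_k/m)\in\pi\QQ[t]$ iff $\sum_k h_k P_k\in\pi\QQ[t]$. By the Weyl equidistribution theorem from \cite{KN} recalled just before this proposition, the sequence $(P_1(n)/m,\dots,P_p(n)/m)_{n\geq 1}$ is then uniformly distributed modulo $2\pi$ on $\TT^p$. This refinement is strictly stronger than the corresponding statement for $(P_1(n),\dots,P_p(n))$ itself, because it lets us control the orbit modulo $2\pi m$ rather than only modulo $2\pi$.

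For the inclusion $F\subseteq H$ I would substitute $\theta_k=P_k(n)$ into relation (ii): interpreting the hypothesis on $R_j$ so that $R_j(n)\in 2\pi\mathbb{Z}$ at every integer $n$, this gives $mP_j(n)\equiv\sum_k a_{j,k}P_k(n)\pmod{2\pi}$ for every $n$. Thus every term of the orbit lies in $H$, and since $H$ is a closed subgroup of $\TT^r$ the closure also lies in $H$.

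For the harder inclusion $H\subseteq F$, fix $(e^{i\theta_1},\dots,e^{i\theta_r})\in H$ and pick a real lift $m\theta_j=\sum_k a_{j,k}\theta_k+2\pi l_j$ with $l_j\in\mathbb{Z}$. The refined equidistribution above produces $(n_k)$ such that $P_i(n_k)\to\theta_i+2\pi l'_i$ modulo $2\pi m$ for any prescribed $(l'_1,\dots,l'_p)\in\{0,\dots,m-1\}^p$. Plugging into (ii) and dividing by $m$ shows that modulo $2\pi$, $P_j(n_k)$ converges to $\theta_j+\frac{2\pi}{m}\bigl(\sum_k a_{j,k}l'_k+R_j(n_k)/(2\pi)-l_j\bigr)$, so convergence to $e^{i\theta_j}$ amounts to the congruences $\sum_k a_{j,k}l'_k+R_j(n_k)/(2\pi)\equiv l_j\pmod{m}$ for every $j\geq p+1$. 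Since $R_j(n)/(2\pi)\bmod m$ depends only on $n\bmod m$, the residue class of $n_k$ modulo $m$ becomes one additional free parameter.

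The main obstacle is showing that this finite system of $r-p$ congruences over $\mathbb{Z}/m$ in the $p+1$ parameters $(l'_1,\dots,l'_p,c)$ admits, for every target $(l_{p+1},\dots,l_r)$, a solution. I expect this to come down to linear algebra over $\mathbb{Z}/m$ driven by the very relations (ii), which are what encode the $m$-th root of unity ambiguity built into the definition of $H$; equivalently one could argue $H\subseteq F$ character-theoretically, by computing the annihilator of $F$ in $\mathbb{Z}^r$ from (ii) and $\pi\QQ$-independence and checking that it coincides with that of $H$.
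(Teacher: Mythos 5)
Your proof has a genuine gap at exactly the place you flag. For $H\subseteq F$ you equidistribute $(P_1(n)/m,\dots,P_p(n)/m)$ over \emph{all} $n$, which forces you to carry the extra sheets $(l'_1,\dots,l'_p)$ and a residue class $c$ of $n$, and you are then left with a system of $r-p$ congruences modulo $m$ that you explicitly leave unsolved. That system is in general \emph{not} solvable for every target: in effect you are trying to reach every $(z_1,\dots,z_r)$ with $z_j^m=\prod_k z_k^{a_{j,k}}$, a set that can be strictly larger than $H$. Already $r=2$, $p=1$, $P_1=P_2=t$, $m=2$, $a_{2,1}=2$, $R_2=0$ has $F=H=\{(w,w);\ w\in\TT\}$, while $\{z_2^2=z_1^2\}$ contains the extra component $\{(w,-w)\}$, and your congruence $2l'_1\equiv l_2\pmod 2$ indeed has no solution for odd $l_2$.

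The paper sidesteps all of this with a different rescaling: it substitutes $n\mapsto 2mn$, setting $Q_j(x)=\tfrac1mP_j(2mx)$ for $j\leq p$ and $Q_j(x)=P_j(2mx)$, $T_j(x)=\tfrac1mR_j(2mx)$ for $j\geq p+1$. Because $R_j$ has no constant term and lies in $\pi\mathbb{Z}[t]$, the value $T_j(n)=\tfrac1mR_j(2mn)$ automatically belongs to $2\pi\mathbb{Z}$ for every integer $n$, so $e^{iT_j(n)}=1$ and the residue-class obstruction never appears. The family $(Q_1,\dots,Q_p)$ is still $\pi\QQ$-independent, so Weyl's theorem produces $n_k$ with $e^{iQ_j(n_k)}\to e^{i\theta_j/m}$ for $j\leq p$, and then $e^{iP_j(2mn_k)}\to e^{i\theta_j}$ for all $j$ using the \emph{exact} relation $m\theta_j=\sum_k a_{j,k}\theta_k$ that defines $H$ (i.e.\ the case $l_j=0$); restricting to exact lifts is precisely what makes your congruence system trivial. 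Your worry about the converse inclusion $F\subseteq H$ is legitimate --- $R_j\in\pi\mathbb{Z}[t]$ only gives $R_j(n)\in\pi\mathbb{Z}$, not $2\pi\mathbb{Z}$ --- but note that the paper's own proof establishes only $H\subseteq F$ and does not address that direction.
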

\begin{proof}
 Let $(\theta_1,\dots,\theta_r)\in\RR^r$ be such that $ m\theta_j=a_{j,1}\theta_1+\dots+a_{j,p}\theta_p$ for $j\geq p+1$. We define $Q_j$ and
$T_j$ by
\begin{eqnarray*}
 Q_j(x)&=&\left\{
\begin{array}{ll}
 \frac1mP_j(2mx)&\textrm{ if }j\leq p\\
P_j(2mx)&\textrm{ if }j\geq p+1\\
\end{array}\right.
\\
T_j(x)&=&\frac1mR_j(2mx),\ j\geq p+1.
\end{eqnarray*}
We may observe that $T_j(n) \in2\pi\mathbb Z$ for any integer $n$ since $R_j$ has no constant term and belongs
to $\pi\mathbb Z[t]$. It is also easy to check that the family $(Q_1,\dots,Q_p)$
remains $\pi\QQ$-independent. Hence we can find a sequence of integers $(n_k)$ such that
$e^{iQ_j(n_k)}$ goes to $e^{i\theta_j/m}$ for any $j$ in $\{1,\dots,p\}$. Now, for $j\geq p+1$,
$$Q_j=a_{j,1}Q_1+\dots+a_{j,p}Q_p+T_j$$
so that $e^{iQ_j(n_k)}$ goes to $e^{i(a_{j,1}\theta_1+\dots+a_{j,p}\theta_p)/m}=e^{i\theta_j}.$
Finally, we have shown that for any $j\leq r$, $e^{iP_j(2mn_k)}$ converges to $e^{i\theta_j}$, which implies the proposition.
\end{proof}

We shall use this proposition under the form of the following corollary.
\begin{corollary}\label{CORINVROT}
 Let $P_1,\dots,P_r$ be real polynomials without constant term. Then the closure of $\{(e^{iP_1(n)},\dots,e^{iP_r(n)});\ n\geq 1\}$
is invariant under complex conjugation.
\end{corollary}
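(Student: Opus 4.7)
The plan is to deduce this corollary directly from the preceding proposition by reducing to a maximal $\pi\QQ$-independent subfamily of $\{P_1,\dots,P_r\}$.

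First, after relabeling, I would extract a maximal $\pi\QQ$-independent subfamily $\{P_1,\dots,P_p\}$. For each $j\in\{p+1,\dots,r\}$, maximality produces a non-trivial integer relation $b_jP_j=b^{(j)}_1P_1+\dots+b^{(j)}_pP_p+S_j$ with $b_j\neq 0$ and $S_j\in\pi\QQ[t]$; the coefficient $b_j$ must be non-zero because the $P_1,\dots,P_p$ are themselves $\pi\QQ$-independent. Clearing denominators in each $S_j$ and then taking a common multiple of the resulting leading integers, I can rewrite all these relations simultaneously in the form $mP_j=a_{j,1}P_1+\dots+a_{j,p}P_p+R_j$ with a single $m\in\mathbb N$, integers $a_{j,k}$, and polynomials $R_j\in\pi\mathbb Z[t]$. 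Since each $P_j$ has no constant term, so does each $R_j$, and I am exactly in the hypotheses of the preceding proposition.

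That proposition then identifies the closure of $\{(e^{iP_1(n)},\dots,e^{iP_r(n)});\ n\geq 1\}$ with
$$K=\left\{(e^{i\theta_1},\dots,e^{i\theta_r});\ m\theta_j=a_{j,1}\theta_1+\dots+a_{j,p}\theta_p\text{ for every }j\geq p+1\right\}.$$
The substitution $\theta_j\mapsto -\theta_j$ corresponds precisely to coordinatewise complex conjugation in $\TT^r$, and because the constraints defining $K$ are linear with integer coefficients, they are stable under this substitution. Hence $K$ is invariant under complex conjugation, which is exactly the claim.

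The only step requiring any care is the algebraic bookkeeping in the reduction, namely producing a single integer $m$ and polynomials $R_j$ with coefficients in $\pi\mathbb Z$ rather than merely in $\pi\QQ$. Once that is in place the conjugation symmetry is immediate from the shape of the defining equations, so I do not anticipate any substantive obstacle beyond invoking the preceding proposition in the right way.
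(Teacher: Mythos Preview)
Your proposal is correct and follows essentially the same approach as the paper: extract a maximal $\pi\QQ$-independent subfamily, verify that the hypotheses of the preceding proposition are met, and observe that the explicit description of the closure given there is manifestly invariant under $\theta_j\mapsto -\theta_j$. The paper's proof is terser, simply asserting that maximality puts one in the setting of the proposition, whereas you spell out the algebraic bookkeeping (clearing denominators in the $S_j$ and passing to a common multiple to obtain a single $m$ and $R_j\in\pi\mathbb Z[t]$); this extra care is warranted and fills in exactly what the paper leaves implicit.
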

\begin{proof}
 We extract from $(P_1,\dots,P_r)$ a maximal family $(P_j)_{j\in J}$ which is $\pi\mathbb Q$-independent.
Without loss of generality, we may assume that $J=\{1,\dots,p\}$. This means that the assumptions of
the previous proposition are satisfied. Hence, the result of the proposition describes
the closure of $\{(e^{iP_1(n)},\dots,e^{iP_r(n)});\ n\geq 1\}$. And this closure is clearly invariant under complex conjugation.
\end{proof}

We turn to the proof of Theorem \ref{mainthmfhc}. We shall in fact prove a variant of it which looks significantly
stronger, since we control simultaneously several rotated orbits.
\begin{theorem}\label{THMFHCSTRONG}
Let $P_1,\dots,P_r$ be real polynomials without constant terms.
Let also $T\in\LX$ be frequently hypercyclic and let $x\in FHC(T)$. Then, for any nonempty open set
$U\subset X$,
$$\big\{n\in\mathbb N;\ \forall l\in\{1,\dots,r\},\ e^{iP_l(n)}T^n x\in U \big\}$$
has positive lower density.
\end{theorem}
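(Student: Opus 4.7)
The plan is to adapt the strategy of Theorem \ref{mainthm} to the frequent hypercyclic setting, using Corollaries \ref{COREXTENSION} and \ref{CORINVROT} together with the combinatorial Lemma \ref{LEMROTFHC}. First I would fix a nonempty open $U$, pick $y_0 \in U$, and choose an open neighbourhood $W \subseteq U$ of $y_0$ together with an open neighbourhood $\mathcal N$ of $(1,\dots,1) \in \TT^r$ small enough that $\zeta_l w \in U$ whenever $w \in W$, $\zeta = (\zeta_l) \in \mathcal N$, and $1 \leq l \leq r$; this is possible by continuity of the scalar action on $X$.

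Next I would enrich the phase data. Write $P_l(n) = \sum_{k=1}^{d_l} \theta_{l,k} n^k$ and set $d = \max_l d_l$. Define
\[
\phi(n) = \bigl(e^{i\theta_{l,k} n^j}\bigr)_{1 \leq l \leq r,\, 1 \leq k \leq d_l,\, 1 \leq j \leq d} \in \TT^D,
\]
and let $\widetilde E$ denote the closure of $\{\phi(n);\, n \geq 1\}$. Each coordinate is of the form $e^{iQ(n)}$ for a real polynomial $Q$ without constant term, so Corollary \ref{CORINVROT} implies $\widetilde E$ is invariant under coordinate-wise conjugation, while Corollary \ref{COREXTENSION} yields that $\{(T^n x, \phi(n));\, n \geq 1\}$ is dense in $X \times \widetilde E$. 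Cover $\widetilde E$ by finitely many open sets $\mathcal O_1, \dots, \mathcal O_q$ of small diameter, pick $\widetilde\zeta^{(j)} \in \mathcal O_j \cap \widetilde E$, and set $I_j = \{n : \phi(n) \in \mathcal O_j\}$, so $\bigcup_j I_j = \mathbb N$. Conjugation invariance produces integers $m_j \geq 1$ with $\phi(m_j)$ close to $\overline{\widetilde\zeta^{(j)}}$.

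The crucial step is the analysis of the shift $n \mapsto n + m_j$. The binomial expansion gives
\[
e^{i\theta_{l,k}(n+m_j)^k} = e^{i\theta_{l,k} n^k} \cdot e^{i\theta_{l,k} m_j^k} \cdot \prod_{i=1}^{k-1}\bigl(e^{i\theta_{l,k} n^i}\bigr)^{\binom{k}{i} m_j^{k-i}},
\]
and every factor on the right is a coordinate of $\phi(n)$ or of $\phi(m_j)$, possibly raised to the integer $\binom{k}{i} m_j^{k-i}$; all of them are therefore controlled by the choice of $\widetilde\zeta^{(j)}$. The diagonal factors ($i = k$ and $i = 0$) combine to approximately $\zeta_{l,k,k}^{(j)} \overline{\zeta_{l,k,k}^{(j)}} = 1$, and a further refinement of the cover --- choosing the off-diagonal components of $\widetilde\zeta^{(j)}$ near roots of unity whose relevant powers equal one --- ensures that $e^{iP_l(n+m_j)} \in \mathcal N$ for every $n \in I_j$ and every $l$.

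Finally I would invoke frequent hypercyclicity: $V_j = T^{-m_j}(W)$ is open and nonempty since $T^{m_j}$ has dense range, so $A_j = \{n : T^n x \in V_j\}$ has positive lower density. Taking $A = A_1 \cup \dots \cup A_q$ and applying Lemma \ref{LEMROTFHC} gives that $\bigcup_j (m_j + A \cap I_j)$ has positive lower density; a disjointification of the cover (or, equivalently, a piecewise redefinition of $A$ matching each $I_j$ to the correct $A_j$) ensures that $n \in A \cap I_j$ implies $n \in A_j$, whence $T^{n+m_j} x \in W$ and $e^{iP_l(n+m_j)} \in \mathcal N$ place $n + m_j$ in the desired set $\{n : \forall l,\ e^{iP_l(n)} T^n x \in U\}$. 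The main obstacles are the cross-term analysis in the third paragraph (handled by enriching $\phi$ beyond the diagonal monomials $e^{i\theta_{l,k} n^k}$ and refining the cover), and the combinatorial matching in the last step (handled by the disjointification together with an equidistribution-type argument on $\widetilde E$ ensuring each $A_j \cap I_j$ contributes positively to the union).
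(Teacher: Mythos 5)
Your plan misses the key structural idea of the paper's proof, which is an \emph{induction on the degree} $d = \max_l \deg P_l$, and without it two of your steps do not close.

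The first gap is the cross-term analysis. After the binomial expansion you must control
$\prod_{i=1}^{k-1}\bigl(e^{i\theta_{l,k}n^i}\bigr)^{\binom{k}{i}m_j^{k-i}}$
for every $n\in I_j$. Even with the enriched vector $\phi$, the cover of $\widetilde E$ only guarantees $|e^{i\theta_{l,k}n^i}-\widetilde\zeta^{(j)}_{l,k,i}|<\delta$ on $I_j$; raising to the exponent $M_j:=\binom{k}{i}m_j^{k-i}$ inflates this error to roughly $M_j\delta$. To keep it small you need $\delta\ll 1/M_j$, but $m_j$ is selected \emph{after} the cover (as a return time of $\phi$ to the conjugate $\overline{\widetilde\zeta^{(j)}}$), so you cannot know $M_j$ before choosing $\delta$. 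Your proposed fix --- aiming the off-diagonal components of $\widetilde\zeta^{(j)}$ at roots of unity killed by $M_j$ --- has the same circularity, and even if the powers of the \emph{centers} were exactly $1$, the $M_j$-th power of a $\delta$-neighbourhood is still a set of diameter $\sim M_j\delta$, so the conclusion $e^{iP_l(n+m_j)}\in\mathcal N$ for all $n\in I_j$ does not follow. The paper sidesteps all of this: it writes $P_l(n_j+k)=P_l(n_j)+P_l(k)+Q_{l,j}(k)$ where $Q_{l,j}$ has degree $\le d-1$ and no constant term, and then invokes the \emph{inductive hypothesis} (with the larger family $(Q_{l,j})_{l,j}$) to produce a single set $A=\{k:\,\forall (l,j),\ e^{iQ_{l,j}(k)}T^kx\in W\}$ of positive lower density. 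Here $W$ is a neighbourhood of $x$, not of a target $y_0$, so the twisted lower-degree returns are exactly what is supplied by the induction. This is precisely why the theorem is stated for $r$-tuples of polynomials even though Theorem \ref{mainthmfhc} only needs a single one.

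The second gap is the combinatorial matching. Lemma \ref{LEMROTFHC} takes a \emph{single} set $A$ of positive lower density and shows $\bigcup_j(n_j+A\cap I_j)$ keeps that property. You propose several sets $A_j=\{n: T^nx\in T^{-m_j}W\}$ and then argue via a ``disjointification'' and ``equidistribution-type argument'' that $\bigcup_j(m_j+A_j\cap I_j)$ still has positive lower density. But $A_j\cap I_j$ can be empty: nothing forces $A_j$ to meet the level set $I_j$ of $\phi$, and Lemma \ref{LEMROTFHC} gives you no handle on intersections of different $A_j$'s with different $I_j$'s. In the paper there is only one set $A$, coming from the induction, and the lemma applies verbatim. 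If you want to salvage your route you would have to prove an equidistribution statement of the form ``the twisted return set is well spread over the cells $I_j$,'' which is essentially as strong as Theorem \ref{THMFHCSTRONG} itself; the induction is what makes the argument non-circular.

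In short: the compactness cover, the conjugation-invariance of the phase set, and Lemma \ref{LEMROTFHC} are all used in the paper as you anticipate, but the engine that makes the cross-terms and the density count work is the induction on $d$, which your proposal replaces by an enriched phase vector and a cover refinement that cannot be made consistent.
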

\begin{proof}
 We denote by $d$ the maximum of the degree of $P_1,\dots,P_r$ and we argue by induction on $d$. The case $d=0$ is trivial since the polynomials have to be equal to zero.
So, let us assume that the theorem has been proved until rank $d-1$ and let us prove it for $d\geq 1$. Let $V\subset X$
and let $\veps>0$ be such that $V$ is open and nonempty and $D(1,\veps)V\subset U$, where $D(a,\veps)$ means
the disk $|z-1|<\varepsilon$. Let us set
$$E=\overline{\big\{e^{iP_1(k)},\dots,e^{iP_r(k)});\ k\geq 0\big\}}\subset\TT^r.$$
By the compactness of $E$, there exist integers $m_1,\dots,m_q$ such that, for any $k\geq 0$,
one may find $j\in\{1,\dots,q\}$ so that $|e^{iP_l(k)}-e^{iP_l(m_j)}|<\veps$ for any $l=1,\dots,r$.
We then set, for $j=1,\dots,q$,
$$I_j=\big\{k\geq 0;\ \forall l\in\{1,\dots,r\},\ |e^{iP_l(k)}-e^{iP_l(m_j)}|<\veps\big\}.$$
Therefore, $\bigcup_{j=1}^q I_j=\mathbb N$.

By Corollary \ref{CORINVROT}, $E$ is invariant under complex conjugation. In particular, for any $j=1,\dots,q$,
$(e^{-iP_1(m_j)},\dots,e^{-iP_r(m_j)})$ belongs to $E$. We now apply Corollary \ref{COREXTENSION}. For any $j=1,\dots,q$,
we may find an integer $n_j$ such that, for any $l\in\{1,\dots,r\}$,
$$e^{iP_l(m_j)}e^{iP_l(n_j)}T^{n_j}x\in V.$$
Since $T$ is continuous, there exists an open neighbourhood $W$ of $x$ such that
$$\forall j\in\{1,\dots,q\},\ \forall l\in\{1,\dots,r\},\ e^{iP_l(m_j)}e^{iP_l(n_j)}T^{n_j}W\subset V.$$
Now, there exist a sequence of polynomials $(Q_{j,l})_{\substack{1\leq j\leq q\\ 1\leq l\leq r}}$ with degree at most $d-1$
and without constant term such that, for any $j$ in $1,\dots,q$, for any $l$ in $1,\dots,r$, for any $k\geq 0$,
$$P_l(n_j+k)=P_l(n_j)+P_l(k)+Q_{l,j}(k).$$
We set
$$A=\big\{k\geq 0;\ \forall (l,j)\in\{1,\dots,r\}\times\{1,\dots,q\},\ e^{iQ_{l,j}(k)}T^k x\in W\big\}.$$
By the induction hypothesis, $A$ has positive lower density. By Lemma \ref{LEMROTFHC},
this remains true for
$$B=\bigcup_{j=1}^q (n_j+A\cap I_j).$$
Now, pick any $n\in B$. There exist $j$ in $\{1,\dots,q\}$ and $k\in A\cap I_j$ such that $n=n_j+k$. This leads to
$$e^{iP_l(n_j+k)}T^{n_j+k}(x)=\underbrace{\underbrace{e^{iP_l(k)}e^{-iP_l(m_j)}}_{\in D(1,\veps)}\underbrace{e^{iP_l(m_j)}e^{iP_l(n_j)}T^{n_j}(\underbrace{e^{iQ_{j,l}(k)}T^k x)}_{\in W}}_{\in V}}_{\in U}.$$
Thus, $B\subset \big\{n\in\mathbb N;\ \forall l\in\{1,\dots,r\},\ e^{iP_l(n)}T^n x\in U\big\}.$
\end{proof}

\begin{proof}[Proof of Theorem \ref{mainthmfhc}]
That $(i)$ implies $(ii)$ is a direct consequence of the previous theorem: when we have a single polynomial,
we may allow a constant term since $e^{i\theta}x\in FHC(T)$ iff $x\in FHC(T)$.

It remains to prove $(iii)$ implies $(i)$. The proof follows the same lines; let $U,V\subset X$ be open and nonempty and let $\veps>0$
with $D(1,\veps)V\subset U$. Let $\lambda_1,\dots,\lambda_q\in\TT$ be such that $\TT$ is contained in $\bigcup_{j=1}^q D(\lambda_j^{-1},\veps)$.
For $j=1,\dots,q$, let us set
$$I_j=\big\{k\geq 0;\ e^{iP(k)}\in D(\lambda_j^{-1},\veps)\big\}.$$
Moreover, for any $j$ in $\{1,\dots q\}$, one may find an integer $n_j$ such that
$T^{n_j}x\in \lambda_jV$ since the assumption and Theorem \ref{mainthm} imply $x\in HC(T)$. Let now $W\subset X$
open and nonempty be such that $T^{n_j}(W)\subset\lambda_j V$ for any $j\in\{1,\dots,q\}$. We finally set
\begin{eqnarray*}
 A&=&\big\{k\geq 0;\ e^{iP(k)}T^k x\in W\}\\
B&=&\bigcup_{j=1}^q (n_j+A\cap I_j).
\end{eqnarray*}
$A$, thus $B$, have positive lower density. Moreover, if $n=n_j+k$ belongs to $B$, then
$$T^{n_j+k}(x)=\underbrace{\underbrace{e^{-iP(k)}}_{\in D(\lambda_j^{-1},\veps)}\underbrace{T^{n_j}\underbrace{(e^{iP(k)}T^k x)}_{\in W}}_{\in \lambda_jV}}_{\in U}.$$
\end{proof}

\begin{remark}
 Even if the result of Theorem \ref{THMFHCSTRONG} looks stronger than condition (ii) of Theorem \ref{mainthmfhc}, it is the natural statement which comes from our proof.
 Indeed, if you follow the proof for a single polynomial, then you have to apply the induction hypothesis for several polynomials!
\end{remark}

\section{Other sequences}

In this section, we study if Theorem \ref{mainthm} remains true when we replace the sequence $(P(n))$ by other classical sequences. We first show that this is not the case
if the sequence grows to infinity too quickly. The counterexample is very easy. It is just a
backward shift on $\ell^2(\mathbb Z_+)$. Wy denote by $B$ the
unweighted backward shift. As usual $(e_n)_{n\geq 0}$ denotes the standard basis on $\ell^2(\mathbb Z_+)$.
\begin{proposition} \label{geomrate}
Let $T=2B$ acting on $\ell^2(\mathbb Z_+)$ and let $(f(n))$ be a sequence of positive integers with $f(n+1)>af(n)$, $n\in \mathbb{N}$, for some $a>1$. Then for
every $x\in HC(T)$ there exists $\theta\in\mathbb R$ such that the set $\big\{ e^{2\pi if(n)\theta}T^n x;\ n\geq
1\big\} $ is not dense in $\ell^2(\mathbb Z_+)$.
\end{proposition}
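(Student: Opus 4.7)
The approach is to construct $\theta\in\RR$ so that $\{e^{2\pi if(n)\theta}T^n x\}$ stays bounded away from $e_0$. Writing $z_n := (T^n x)_0 = 2^n x_n$ and $r_n := \|T^n x\|^2 - |z_n|^2$, the identity
\[
\|e^{2\pi i f(n)\theta}T^n x - e_0\|^2 = |e^{2\pi i f(n)\theta}z_n - 1|^2 + r_n
\]
shows the only dangerous indices lie in $B := \bigl\{n : r_n<\veps^2 \text{ and } \bigl||z_n|-1\bigr|<\veps\bigr\}$, where $T^n x$ is close to the circle $\TT e_0$ and $\lambda_n := z_n/|z_n| \in \TT$ is well defined. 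Outside $B$ the distance $\|e^{2\pi i f(n)\theta}T^n x - e_0\|$ is at least $\veps$ automatically (either because $r_n\geq\veps^2$, or because $|e^{2\pi if(n)\theta}z_n-1|\geq\bigl||z_n|-1\bigr|\geq\veps$), so it suffices to keep $|e^{2\pi i f(n)\theta}\lambda_n - 1|$ uniformly bounded below on $B$.

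The first key step is a sparsity estimate for $B$. Since $Te_0 = 0$, decomposing $T^n x = z_n e_0 + v$ with $v\perp e_0$ and $\|v\|=\sqrt{r_n}<\veps$ gives $T^{n+m}x = 2^m B^m v$ for $m\geq 1$, hence $\|T^{n+m}x\| < 2^m\veps$; if $n+m$ also lies in $B$, then $\|T^{n+m}x\|\geq|z_{n+m}|>1-\veps$, forcing $\veps>1/(2^m+1)$. Fixing an integer $k$ with $a^{k+1}>2$ (possible since $a>1$) and choosing $\veps<1/(2^k+1)$, consecutive elements of $B=\{n_1<n_2<\cdots\}$ are therefore at distance at least $k+1$, so $f(n_{j+1})/f(n_j)\geq a^{k+1}>2$. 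The second key step is a shifted Cantor construction: pick $\delta>0$ small enough that $(2-2\delta)/(1-2\delta) < a^{k+1}$, and build nested closed intervals $I_j\subset I_{j-1}$ of length $(1-2\delta)/f(n_j)$ on which $\{f(n_j)\theta + \arg(\lambda_{n_j})/(2\pi)\} \in [\delta, 1-\delta]$. The ``good'' set for step $j$ in $[0,1]$ is a union of intervals of length $(1-2\delta)/f(n_j)$ periodic with period $1/f(n_j)$, and the inequality $\ell_{j-1}=(1-2\delta)/f(n_{j-1}) \geq (2-2\delta)/f(n_j)$ (which follows from $f(n_j)/f(n_{j-1})\geq a^{k+1}$ together with the choice of $\delta$) guarantees that $I_{j-1}$ contains a full such good interval, regardless of the shift $\arg(\lambda_{n_j})$. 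Any $\theta\in\bigcap_j I_j$ satisfies $|e^{2\pi i f(n)\theta}\lambda_n - 1|\geq 2\sin(\pi\delta)=:\eta>0$ for every $n\in B$.

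Taking $\veps$ also less than $\eta/2$, the verification on $B$ is then routine: the triangle inequality
\[
|z_n - e^{-2\pi if(n)\theta}| \geq |\lambda_n - e^{-2\pi if(n)\theta}| - \bigl||z_n|-1\bigr| \geq \eta - \veps \geq \eta/2
\]
gives $\|e^{2\pi i f(n)\theta}T^n x - e_0\| \geq \eta/2$, and combined with the automatic bound off $B$ we conclude $e_0\notin\overline{\{e^{2\pi i f(n)\theta}T^n x\}}$. The main obstacle is the shifted Cantor step: the classical Erd\H{o}s--Taylor construction avoiding a \emph{fixed} small forbidden arc works for any lacunary ratio strictly greater than $1$, but a target that shifts with $n$ (as our $\bar\lambda_n$) genuinely requires a lacunary ratio strictly greater than $2$. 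The sparsity estimate for $B$, which exploits the specific vanishing $Te_0=0$ of the Rolewicz shift, is precisely what amplifies the given $a>1$ into the required effective ratio $a^{k+1}>2$.
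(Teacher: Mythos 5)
Your proof is correct, and it takes a genuinely different technical route from the paper. Both proofs share the same conceptual skeleton: exploit the structure of the Rolewicz shift to show that the set of ``dangerous'' indices (those $n$ for which $T^n x$ is close, up to a unimodular rotation, to a chosen basis vector) is sparse with gaps bounded below, and then use that sparsity together with the geometric growth of $f$ to construct a bad $\theta$. The two proofs diverge at exactly these two steps, and in an instructive way.

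For the sparsity step, the paper targets $e_N$ with $N$ chosen so that $\sum_{j\geq N+1}a^{-j}\leq 1/4$, and uses a fixed radius $1/2$: if $T^n x\approx\mu e_N$ with $|\mu|=1$, then for $1\leq k\leq N$ the coordinate of $T^{n+k}x$ at $e_{N-k}$ has size about $2^k$, which forces $T^{n+k}x$ far from $\TT e_N$. You instead keep the target $e_0$ fixed, shrink the radius $\veps$, and use $Te_0=0$ so that $T^{n+m}x=T^m v$ has norm $<2^m\veps$; the number $k$ is then a free parameter you tune so that $a^{k+1}>2$. Both observations are correct, and they trade $N$ vs.~$\veps$ in dual ways. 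For the construction of $\theta$, the paper writes $\theta=\sum_n\alpha_n/f(n)$ with $\alpha_n\in\{0,1/2\}$ chosen greedily to push $e_N^*(e^{2\pi i f(n)\theta}T^n x)$ into the left half-plane, and controls the as-yet-unchosen tail $2\pi f(n)\sum_{j\geq n+N+1}\alpha_j/f(j)<\pi/4$ using the sparsity to skip the first $N$ tail terms and the geometric growth to make the rest small. You instead run a nested-interval (Cantor-type) argument: intervals $I_j$ of length $(1-2\delta)/f(n_j)$ on which $\{f(n_j)\theta+\arg(\lambda_{n_j})/2\pi\}\in[\delta,1-\delta]$, with the inclusion $I_j\subset I_{j-1}$ guaranteed by the effective lacunarity $f(n_j)/f(n_{j-1})>2$. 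This is where the amplification to ratio $>2$ (rather than any ratio $>1$) is genuinely needed, as you correctly note: a shifted forbidden arc cannot be avoided by the naive nested-interval argument unless each scale has room for a full period plus a full good interval, whereas the paper's digit expansion needs only the tail series $\sum a^{-j}$ to be small. Two small polish items: you should fix a concrete starting interval $I_0$ of length at least $2$ (since $f(n_1)$ could equal $1$, an interval of length $1$ need not contain a full good interval at the first step), and you should note explicitly that the parameters are chosen in the order $k$, then $\delta$, then $\veps<\min\{1/(2^k+1),\sin(\pi\delta)\}$, so that there is no circularity in defining $B$ after $\veps$ is fixed.
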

\begin{proof}
Let $x\in HC(T)$. Choose $N\geq 1$ such that $\sum_{j\geq N+1}a^{-j}\leq 1/4$ and define the set
$$A=\big\{n\in\mathbb N;\ \exists \lambda\in\mathbb T,\ \|\lambda T^n x-e_N\|<1/2\big\}.$$
Observe that if $n$ belongs to $A$, then $n+k$ does not belong to $A$ for any $k\leq N$. Indeed, one can write
$T^n x=\mu e_N+y$ with $\|y\|\leq 1/2$ and $|\mu|=1$, so that
$T^{n+k}x=\mu 2^k e_{N-k}+T^k y$. This yields, for any $\lambda\in\mathbb T$,
\begin{eqnarray*}
\|\lambda T^{n+k}x-e_N\|&=&\|\lambda\mu 2^k e_{N-k}-e_N+T^k y\|\\
&\geq&2^k-2^{k-1}\geq 1.
\end{eqnarray*}
We define a sequence $(\alpha_n)$ as follows:
\begin{itemize}
\item $\alpha_n=0$ provided $n\notin A$;
\item if $n\in A$, we set $\theta_{n-1}=\sum_{j=0}^{n-1} \frac{\alpha_j}{f(j)}$. We then choose $\alpha_n\in \{0,1/2\}$ such that
$$\Re e\left(e^{2\pi i(f(n)\theta_{n-1}+\alpha_n)}e_N^*(T^n x)\right)\leq 0.$$
\end{itemize}
We finally define $\theta=\sum_{n\geq 0}\frac{\alpha_n}{f(n)}$ and we claim that $\{e^{2\pi if(n)\theta}T^n x;\ n\geq 1\}$ is not dense in $\ell^2(\mathbb Z_+)$.
Precisely, let us show that $e_N$ does not belong to the
closure of $\{e^{2\pi if(n)\theta}T^nx;\ n\geq 1\}$. Indeed, when $n$ does not belong to $A$, we are sure that
$\|e^{2\pi if(n)\theta}T^n x -e_N\|\geq 1/2$. Otherwise, when $n$ belongs to $A$, we can write
$$\|e^{2\pi if(n)\theta}T^n x-e_N\|\geq |e_N^*(e^{2\pi if(n)\theta}T^n x)-1|.$$
Now,
\begin{eqnarray*}
e_N^*(e^{2\pi if(n)\theta}T^n x)&=&e^{2\pi i(f(n)\theta_{n-1}+\alpha_n)}e_N^*(T^n x)e^{2\pi i f(n)\left(\sum_{j\geq n+1}\frac {\alpha_j}{f(j)}\right)}\\
&=&e^{2\pi i(f(n)\theta_{n-1}+\alpha_n)}e_N^*(T^n x)e^{2\pi i f(n)\left(\sum_{j\geq n+N+1}\frac {\alpha_j}{f(j)}\right)}.
\end{eqnarray*}
We set $z=e^{2\pi i(f(n)\theta_{n-1}+\alpha_n)}e_N^*(T^n x)$ and $\beta=2\pi  f(n)\left(\sum_{j\geq n+N+1}\frac {\alpha_j}{f(j)}\right)$.
By the construction of $\alpha_n$, $\Re e(z)\leq 0$. Moreover, it is easy to check that $\beta\in[0,\pi/4]$:
$$0\leq2\pi  f(n)\left(\sum_{j\geq n+N+1}\frac {\alpha_j}{f(j)}\right) \leq \pi f(n)\sum_{j\geq N+n+1}\frac{1}{a^{j-n}f(n)}\leq \frac \pi4.$$
Thus, $e_N^*(e^{2\pi if(n)\theta}T^n x)$ does not belong to the cone $\{\rho e^{i\gamma};\rho>0,\
|\gamma|\leq\pi/4\}.$ In particular, there exists $\delta>0$ such that
$$\|e^{2\pi if(n)\theta}T^n x-e_N\|\geq\delta.$$
\end{proof}

\begin{remark}
In the previous proposition one cannot conclude the stronger assertion that there exists $\theta\in\mathbb R$
such that for every $x\in HC(2B)$ the set $\{ e^{2\pi if(n)\theta}(2B)^n x;\ n\geq 1\} $ is not dense in
$\ell^2(\mathbb Z_+)$. The reason for this is very simple and comes from the fact that $2B$ is hypercyclic in a
very strong sense, namely it satisfies the hypercyclicity criterion. For a comprehensive discussion on the
hypercyclicity criterion and its several equivalent forms we refer to \cite{BM}, \cite{GP}. To explain now
briefly, take any operator $S\in \LX$ satisfying the hypercyclicity criterion and let $(\lambda_n)$ be any
sequence of unimodular complex numbers. It is then immediate that the sequence of operators $(\lambda_nS^n)$
also satisfies the hypercyclicity criterion;\ hence, the set of $y\in X$ such that $\{ \lambda_nS^ny;n\geq 1\}$
is dense in $X$ is $G_{\delta}$ and dense in $X$. By an appeal of Baire's category theorem one can find $x\in
HC(S)$ such that $\overline{ \{ \lambda_nS^nx;\ n\geq 0\} }=X$. Applying the last for $S:=2B\in \mathfrak
L(\ell^2(\mathbb Z_+))$ and $\lambda_n:=e^{2\pi if(n)\theta}$, $n\geq 1$ we see that for every $\theta\in\mathbb
R$ there exists $x\in HC(2B)$ such that the set $\{ e^{2\pi if(n)\theta}(2B)^n x;\ n\geq 1\} $ is dense in
$\ell^2(\mathbb Z_+)$.
\end{remark}

On the contrary, we have an analog to Theorem \ref{mainthm} for sequences which grow slowly to infinity. The growth condition
which comes into play here is based on the increases of the function.

\begin{theorem} \label{THMSLOWGROWTH}
Let $X$ be a Banach space, let $T\in \LX $, let $x\in X$ and let $(f(n))$ be a sequence of real numbers satisfying the following condition: there exist
an integer $d\geq 0$, sequences $(g_l(n))_n$ for $0\leq l\leq d$ and $(\veps_k(n))_n$ for any $k\geq 1$, such that, for any $n,k\geq 1$,
$$f(n+k)-f(n)=\sum_{l=0}^{d}g_l(n)k^l+\veps_k(n)$$
and, for a fixed $k\geq 1$, $|\veps_k(n)|\xrightarrow{n\to+\infty}0.$
Then the following are equivalent.
\begin{itemize}
\item[(i)]$x\in HC(T)$;
\item[(ii)]$\{e^{ i f(n)}T^n x;\ n\geq 1\}$ is dense in $X$.
\end{itemize}
\end{theorem}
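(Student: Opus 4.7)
The implication (ii) $\Rightarrow$ (i) is immediate: density of $\{e^{if(n)}T^n x;\ n\geq 1\}$ forces density of the larger projective orbit $\{\lambda T^n x;\ \lambda\in\TT,\ n\geq 0\}$, which by the Le\'on-Saavedra and M\"uller theorem is equivalent to $x\in HC(T)$.

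For (i) $\Rightarrow$ (ii), fix $y\in X$ and $\delta>0$, and aim to produce $n$ with $\|e^{if(n)}T^n x-y\|<\delta$. The idea is to write $n=N+k$ with $N$ very large and $k$ comparatively small, so that the hypothesis yields the factorisation
\[
e^{if(N+k)}T^{N+k}x \;=\; e^{i\varepsilon_k(N)}\cdot e^{if(N)}\cdot e^{iQ_N(k)}\cdot T^k(T^N x),\qquad Q_N(k)=\sum_{l=0}^{d}g_l(N)k^l.
\]
For each fixed $N$, $Q_N$ is a real polynomial in $k$ and $T^N x\in HC(T)$, so I apply Theorem \ref{THMPREP} with $u=T^Nx$, $v=e^{-i(f(N)+g_0(N))}y$, $G=\TT$, $\mathbf g=(e^{ig_1(N)},\ldots,e^{ig_d(N)})$ and $\mathbf d=(1,\ldots,d)$. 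This produces a sequence $(k_j)_{j\geq 1}$ of positive integers (depending on $N$) with $T^{N+k_j}x\to e^{-i(f(N)+g_0(N))}y$ and $e^{ig_l(N)k_j^{l'}}\to 1_{\TT}$ for all $1\leq l'\leq l\leq d$. Multiplying these convergences gives $e^{i(f(N)+Q_N(k_j))}T^{N+k_j}x\to y$ as $j\to\infty$.

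To conclude, I would extract a diagonal sequence $(N_s,k_s)$ with $N_s\to\infty$ so that both the polynomial approximation error $\|e^{i(f(N_s)+Q_{N_s}(k_s))}T^{N_s+k_s}x-y\|$ and the phase error $|\varepsilon_{k_s}(N_s)|$ tend to zero; since the first convergence forces $\|T^{N_s+k_s}x\|\leq \|y\|+1$ eventually, the factor $e^{i\varepsilon_{k_s}(N_s)}$ then contributes a vanishing error and (ii) follows. \emph{The main obstacle} is that Theorem \ref{THMPREP} provides no quantitative control on the witnesses $k_j$, whereas $|\varepsilon_k(N)|\to 0$ is only granted for each fixed $k$ as $N\to\infty$; a naive diagonal may thus fail to meet both requirements simultaneously. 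I would overcome this by choosing tolerances $\eta_s\downarrow 0$ together with caps $K_s\uparrow\infty$, taking $N_s$ large enough that $\max_{k\leq K_s}|\varepsilon_k(N_s)|<\eta_s$, and then invoking compactness of $\TT^d$ (the coefficient phases $(e^{ig_l(N)})_l$ lie in a compact set, so one can pass to a subsequence of $N$'s along which they converge) together with Corollary \ref{COREXTENSION} applied to the limiting polynomial, to secure a witness $k\leq K_s$ achieving the polynomial approximation for some admissible $N\geq N_s$. This pigeonhole-and-compactness step, which upgrades the qualitative density of Theorem \ref{THMPREP} to a bounded-witness statement along a suitable subsequence, is the technical heart of the argument.
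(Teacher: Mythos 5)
Your reduction of (ii) $\Rightarrow$ (i) via the projective orbit and Le\'on--Saavedra--M\"uller is fine, and the factorisation $e^{if(N+k)}T^{N+k}x = e^{i\varepsilon_k(N)}e^{if(N)}e^{iQ_N(k)}T^k(T^N x)$ with $Q_N(k)=\sum_l g_l(N)k^l$ is the right starting point. You also correctly identify the crux: Theorem \ref{THMPREP} gives no control on the size of the witness $k$, while $|\varepsilon_k(n)|\to 0$ is only guaranteed for each fixed $k$. However, the pigeonhole/compactness repair you sketch does not close. The difficulty is not only that the polynomial coefficients vary with $N$; the \emph{starting vector} $T^N x$ and the \emph{target} $e^{-i(f(N)+g_0(N))}y$ vary as well. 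Passing to a subsequence along which the phases $(e^{ig_l(N)})_l$ converge and applying Corollary \ref{COREXTENSION} to the limit polynomial gives a bounded witness only if the vector $u$ in that application is held fixed -- but you take $u=T^Nx$, and nothing forces a single $k$ to serve the whole subsequence of vectors $T^{N_s}x$.

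The paper fixes this with two moves absent from your proposal. First, it proves a uniform lemma (Lemma \ref{LEMSLOWGROWTH}): for \emph{fixed} $x\in HC(T)$, $y\in X$, $\varepsilon>0$, there is one $K$ such that for \emph{every} $P\in\mathbb R_d[t]$ and \emph{every} $\mu\in\TT$ some $k\le K$ satisfies $\|e^{iP(k)}T^k x-\mu y\|<\varepsilon$; the compactness used is of $\mathbb R_d^\pi[t]\times\TT$ (coefficients reduced mod $2\pi$ together with the target phase $\mu$), and crucially the vector $x$ does not vary. Second, to reduce to this fixed-vector statement, it exploits $x\in HC(T)$ to choose arbitrarily large $n$ with $T^n x$ close to $x$, so that $e^{if(n)}T^n x=\alpha_n x+z$ with $\alpha_n\in\TT$ and $\|z\|<\varepsilon/\|T\|^K$; applying the lemma with $P=P_n$ (coefficients $g_l(n)$) and $\mu=\alpha_n^{-1}$ then yields $k\le K$, and the bound $\|T^kz\|\le\|T\|^K\|z\|$ together with $\sup_{k\le K}|\varepsilon_k(n)|\to 0$ finishes. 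Your sketch lacks both the target-phase compactification and, more importantly, the ``return close to $x$'' step; without the latter the bounded-witness claim along a subsequence is unjustified, since the minimal witness for $T^{N}x$ can a priori grow without bound.
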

\begin{proof}
We just need to prove that $(i)\implies (ii)$. We are going to apply Theorem \ref{mainthm} (observe that a polynomial $P$
satisfies the assumptions of Theorem \ref{THMSLOWGROWTH} with $d=\deg(P)$ and $\veps_k(n)=0$!) and a compactness argument.
\begin{lemma}\label{LEMSLOWGROWTH}
 Let $d\geq 0$, $x\in HC(T)$, $y\in X$ and $\veps>0$. There exists $K\geq 1$ such that, for any $P\in\mathbb R_d[t]$,
for any $\mu\in\TT$, there exists $k\leq K$ such that
\begin{eqnarray}
 \|e^{iP(k)}T^k x-\mu y\|<\veps. \label{EQLEMSLOWGROwTH}
\end{eqnarray}
\end{lemma}
\begin{proof}[Proof of Lemma \ref{LEMSLOWGROWTH}]
Let us observe that if $(P,\mu)$ satisfies (\ref{EQLEMSLOWGROwTH}) for a fixed $k$, this inequality is also satisfied
with the same $k$ for any $(Q,\lambda)$ in a neighbourhood of $(P,\mu)$. Moreover, by Theorem \ref{mainthm},
given any $(P,\mu)\in \mathbb R_d[t]\times\mathbb T$, we know that we may always find an integer $k$ such that (\ref{EQLEMSLOWGROwTH}) is true.
Define now
$$\mathbb R_d^{\pi}[t]=\left\{P=\sum_{j=0}^d \theta_j t^j;\ \theta_j\in[0,2\pi]\right\}.$$
$\mathbb R_d^\pi[t]$ is a compact subset of $\mathbb R_d[t]$. Moreover, for any $P\in\mathbb R_d[t]$, there
exists some $Q\in \mathbb R_d^\pi[t]$ such that $e^{iQ(k)}=e^{iP(k)}$ for any $k\in\mathbb Z$. Hence, Lemma \ref{LEMSLOWGROWTH}
follows from the compactness of $\mathbb R_d^\pi[t]\times\TT$.
\end{proof}
We come back to the proof of Theorem \ref{THMSLOWGROWTH}. We fix $\veps>0$ and $y\in X$. We apply Lemma \ref{LEMSLOWGROWTH}
to the 4-tuple $(d,x,y,\veps)$. We then set $\delta=\veps/\|T\|^K$. Since $x\in HC(T)$, there exist $n\geq 1$ as large as necessary
and $\alpha_n\in\TT$ such that
$$e^{if(n)}T^n x=\alpha_n x+z,\ \|z\|<\delta.$$
Then, there exists $k\leq K$ such that, setting $P_n(k)=\sum_{l=0}^d g_l(n)k^l$,
$$e^{iP_n(k)}T^k x=\alpha_n^{-1}y+z',\ \|z'\|<\veps.$$
Thus,
\begin{eqnarray*}
 e^{if(n+k)}T^{n+k}x&=&e^{i\big(f(n+k)-f(n)\big)}T^k\big(e^{if(n)}T^n x\big)\\
&=&e^{i\veps_k(n)}e^{iP_n(k)}T^k\big(\alpha_n x+z\big)\\
&=&e^{i\veps_k(n)}\big(y+\alpha_n z'+e^{iP_n(k)}T^k z\big).
\end{eqnarray*}
Since $\sup_{0\leq k\leq K}|\veps_k(n)|$ goes to 0 as $n$ goes to infinity, we get
$$\big\|e^{if(n+k)}T^{n+k}x-y\big\|<3\veps,$$
provided $n$ has been chosen large enough.
\end{proof}
This theorem covers the cases of many sequences which do not grow too quickly to infinity, like $f(n)=n^a\log^b(n+1)$, $(a,b)\in\mathbb R^2$, or finite linear
combinations of such functions.
Interestingly, we may also observe that Theorem \ref{THMPREP} does not extend to this level of generality.
\begin{example} \label{noTHMPREP}
Let $T=2B$ acting on $\ell^2(\mathbb Z_+)$. There exists $x\in HC(T)$ such that $(e_0,1)$ does not belong to
$\overline{\{(T^n x,e^{i\log (n)});\ n\geq 1\}}$.
\end{example}
\begin{proof}
Observe that
$$\Re e(e^{i\log (n)})\geq 0\iff n\in \bigcup_{k\geq 0}[\exp(2k\pi-\pi/2),\exp(2k\pi+\pi/2)]\iff n\in\bigcup_{k\geq 0} (a_k,b_k)$$
where $(a_k)$ and $(b_k)$ are sequences of integers satisfying $a_{k+1}-b_k\to+\infty$. Let $(x_k)$ be a dense subset of $\ell^2(\mathbb Z_+)$ such that
for any $k\geq 1$, $\|x_k\|\leq k$ and $x_k$ has a finite support contained in $[0,a_{k+1}-b_k-1]$. We set
$$x=\sum_{j\geq 1}\frac1{2^{b_{j^2}}}S^{b_{j^2}}x_j,$$
where $S$ is the forward shift. Provided $n\in \bigcup_k (a_k,b_k)$, we know that $\|T^n x-e_0\|\geq 1$ since $\langle T^n x,e_0\rangle=0$. In particular,
$(e_0,1)$ does not belong to $\overline{\{(T^n x,e^{i\log (n)});\ n\geq 1\}}$. Nevertheless, $x$ is a hypercyclic vector for $T$. Indeed,
$$\|T^{b_{k^2}}x-x_k\|\leq \sum_{j\geq k+1} \left\|\frac{1}{2^{b_{j^2}-b_{k^2}}}S^{b_{j^2}-b_{k^2}}x_j\right\|\leq \sum_{j\geq k+1}\frac{j}{2^{b_{j^2}-b_{k^2}}}\leq \sum_{j\geq k+1}\frac{j}{2^j}\xrightarrow{k\to+\infty} 0.$$
\end{proof}

As a consequence of Theorem \ref{THMSLOWGROWTH}, given $T\in \LX$ and a sequence of real numbers $(f(n))$
such that $f(n+1)-f(n) \to 0$, then $x\in HC(T)$ if and only if $\{e^{2\pi i f(n)}T^n x;\ n\geq
1\}$ is dense in $X$. Rephrasing this, we can say that for every sequence $(\lambda_n)\subset \mathbb{T}$ with
$\lambda_{n+1}/\lambda_n \to 1$, $x\in HC(T)$ if and only if $\{ \lambda_nT^n x;\ n\geq 1\}$ is dense in $X$.
One now may wonder whether the assumption ``$(\lambda_n)\subset \mathbb{T}$" can be dropped. However, this is
not the case. Indeed, Le\'{o}n-Saavedra proved in \cite{L} that there exists a hypercyclic operator $T\in \LX$
such that for every $x\in X$ the set $\{ \lambda_nT^nx;\ n\geq 1\}$ is not dense in $X$ where $\lambda_n=1/n$,
$n=1,2,\ldots $ and of course $\lambda_{n+1}/\lambda_n\to 1$. In general, if we keep the assumption
$\lambda_{n+1}/\lambda_n\to 1$ but we move away form the unit circle $\mathbb{T}$, that is $\lambda_n\in
\mathbb{C}$, then none of the implications $(ii)\implies (i)$, $(iii)\implies (ii)$ in Theorem \ref{mainthm}
hold. This follows by Propositions 2.4 and 2.5 from \cite{CH2}. On the other hand, under the additional
assumption that $T\in \LX$ is hypercyclic it is known \cite{CH1} that for $(\lambda_n)$ a sequence of non-zero
complex numbers with $\lambda_{n+1}/\lambda_n\to 1$ and $x\in X$, if the set $\{ \lambda_nT^nx;\ n\geq 1\}$ is
somewhere dense then it is everywhere dense.

\section{Uniformly distributed sequences and generic statements}
In Section 4 we showed that Theorem \ref{mainthm} is no longer true for sequences of unimodular complex numbers
whose phases grow to infinity at a geometric rate, see Proposition \ref{geomrate}. In particular, denoting by
$B$ the unweighted backward shift on $\ell^2(\mathbb Z_+)$ then, for every $x\in HC(2B)$ there exists
$\theta\in\mathbb R$ such that the set $\big\{ e^{2\pi i2^n\theta}(2B)^n x;\ n\geq 1\big\} $ is not dense in
$\ell^2(\mathbb Z_+)$. In this section we shall establish results, both in measure and category, going to the
opposite direction. For instance, a consequence of our result, Proposition \ref{prop1generic}, is that for every
$x\in HC(2B)$ the set of $\theta$'s in $\mathbb{R}$ such that $\big\{ e^{2\pi i2^n\theta }(2B)^n x;\ n\geq
1\big\} $ is dense in $\ell^2(\mathbb Z_+)$ is residual and of full Lebesgue measure in $\mathbb{R}$. This is in
sharp contrast with Proposition \ref{geomrate}. Such kind of behavior comes as a  natural consequence from
the general metric theorem of Koksma, see Theorem 4.3 in Chapter 1 of \cite{KN}. Koksma's theorem generalizes
the beautiful result of Weyl \cite{KN}: \textit{if $(n_k)$ is a distinct sequence of integers then the sequence
$(n_k\theta )$ is uniformly distributed for almost every $\theta \in \mathbb{R}$ }. Here, we shall need the
following corollary of Koksma's theorem, see Corollary 4.3 in Chapter 1 of \cite{KN}, which we state as a
theorem.

\begin{theorem} (Koksma) \label{Koksma}
Let $(f(n))$ be a sequence of real numbers such that for some $\delta>0$, $|f(n)-f(m)|>\delta$ for $n\neq m$. Then
the sequence $(f(n)\theta)$ is uniformly distributed for almost every $\theta \in \mathbb{R}$.
\end{theorem}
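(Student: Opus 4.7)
The plan is to invoke Weyl's criterion: the sequence $(f(n)\theta)_n$ is equidistributed modulo $1$ if and only if for every non-zero integer $h$,
$$S_N(\theta):=\frac1N\sum_{n=1}^N e^{2\pi i h f(n)\theta}\xrightarrow[N\to\infty]{}0.$$
Since $\mathbb{Z}\setminus\{0\}$ is countable and $\RR$ is a countable union of bounded intervals, it suffices to fix a bounded interval $[a,b]$ and a non-zero integer $h$ and prove that $S_N(\theta)\to 0$ for almost every $\theta\in[a,b]$; the full statement then follows by intersecting countably many full-measure sets.

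The core ingredient is an $L^2([a,b])$ estimate on $S_N$. Expanding the modulus squared and integrating, the diagonal terms $n=m$ contribute $(b-a)/N$, while each off-diagonal term is bounded by
$$\left|\int_a^b e^{2\pi i h(f(n)-f(m))\theta}\,d\theta\right|\leq \frac{1}{\pi|h|\,|f(n)-f(m)|}.$$
The separation hypothesis says that for each fixed $n$, the set $\{f(m):m\leq N,\,m\neq n\}$ is $\delta$-separated in $\RR$, so at most two of its elements lie in each annulus $\{y:k\delta\leq|y-f(n)|<(k+1)\delta\}$. Hence $\sum_{m\neq n}1/|f(n)-f(m)|=O(\log N/\delta)$, and summing over $n$ gives
$$\int_a^b|S_N(\theta)|^2\,d\theta = O\!\left(\frac{\log N}{N}\right),$$
with a constant depending on $a$, $b$, $h$, $\delta$.

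To promote this mean-square decay to pointwise convergence almost everywhere, I would restrict to the subsequence $N_k=k^2$: then $\sum_k \int_a^b|S_{N_k}(\theta)|^2\,d\theta<\infty$, so by Chebyshev's inequality plus Borel--Cantelli (equivalently, Beppo-Levi on $\sum_k|S_{N_k}|^2$), $S_{N_k}(\theta)\to 0$ for a.e. $\theta\in[a,b]$. The indices between consecutive squares are handled by the elementary bound $|S_N(\theta)-S_{N_k}(\theta)|\leq \tfrac{N-N_k}{N}(1+|S_{N_k}(\theta)|)=O(1/k)$, uniform in $\theta$ for $N_k\leq N<N_{k+1}$, forcing $S_N(\theta)\to 0$ on the same full-measure set. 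The main obstacle is the $L^2$ bound: it is precisely the separation hypothesis that keeps $\sum 1/|f(n)-f(m)|$ from growing faster than $N\log N$, and without it the Borel--Cantelli step collapses.
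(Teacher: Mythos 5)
The paper offers no proof of this statement: it is quoted directly from Kuipers and Niederreiter \cite{KN} (Corollary~4.3 in Chapter~1), so there is no ``paper proof'' to compare against. Your argument is a correct, self-contained proof, and it is in fact the standard metric-theoretic proof that underlies Koksma's theorem in that reference: reduce via Weyl's criterion to showing $S_N(\theta)=\tfrac1N\sum_{n\leq N}e^{2\pi ihf(n)\theta}\to 0$ a.e.\ for each fixed $h\neq 0$, establish the $L^2$ bound $\int_a^b|S_N|^2\,d\theta=O(\log N/N)$, and upgrade to pointwise a.e.\ convergence by Beppo--Levi along $N_k=k^2$ together with the elementary interpolation $|S_N-S_{N_k}|\leq \tfrac{N-N_k}{N}(1+|S_{N_k}|)=O(1/k)$. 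All the individual estimates check out: the off-diagonal integral bound $\bigl|\int_a^b e^{2\pi ih(f(n)-f(m))\theta}\,d\theta\bigr|\leq \tfrac{1}{\pi|h|\,|f(n)-f(m)|}$, the annulus-counting argument giving at most two indices per shell and hence $\sum_{m\leq N,\,m\neq n}1/|f(n)-f(m)|=O(\log N/\delta)$, and the summability $\sum_k\log(k^2)/k^2<\infty$. The only difference from the cited source is one of scope: Kuipers--Niederreiter first prove Koksma's general metric theorem for families $u_n(\theta)$ under a derivative-separation hypothesis and then specialize, whereas you prove the needed corollary for $u_n(\theta)=f(n)\theta$ directly; this is more economical and entirely adequate for the paper's purposes.
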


\begin{proposition} \label{prop1generic}
Let $X$ be a Banach space and let $T\in \LX$ be a hypercyclic operator. Let also $(f(n))$ be a sequence of real
numbers such that for some $\delta>0$, $|f(n)-f(m)|>\delta$ for $n\neq m$. Then for every $x\in HC(T)$ there
exists a residual subset $A$ of $\mathbb{R}$ with full measure such that for every $\theta \in A$ the set $\{
(T^nx,e^{2\pi i f(n)\theta });\ n\geq 1\}$ is dense in $ X\times \mathbb{T}$.
\end{proposition}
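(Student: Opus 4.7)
The plan is to combine a Baire category argument with Koksma's theorem (Theorem \ref{Koksma}), applied not to the full sequence $(f(n))$ but to the sub\-sequence indexed by those $n$ for which $T^nx$ visits a prescribed open set. Since $T$ admits a hypercyclic vector, $X$ is separable; I would fix a countable base $(U_j)_{j\geq 1}$ of nonempty open subsets of $X$ and a countable base $(V_k)_{k\geq 1}$ of open arcs of $\TT$. Density of $\{(T^nx,e^{2\pi i f(n)\theta});\ n\geq 1\}$ in $X\times\TT$ is then equivalent to $\theta\in A:=\bigcap_{j,k}A_{j,k}$, where
\[
A_{j,k}=\bigcup_{n\in N(x,U_j)}\bigl\{\theta\in\RR:\ e^{2\pi i f(n)\theta}\in V_k\bigr\},\qquad N(x,U_j)=\{n\geq 1:T^nx\in U_j\}.
\]
Each $A_{j,k}$ is visibly open, so it suffices to show that it is both dense and of full Lebesgue measure.

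For density, I would exploit that the separation hypothesis $|f(n)-f(m)|>\delta$ forces $|f(n)|\to+\infty$; this remains true along the infinite set $N(x,U_j)$. For each such $n$, the set $\{\theta:e^{2\pi i f(n)\theta}\in V_k\}$ is $1/|f(n)|$-periodic in $\RR$ and meets every period in a fixed nonempty open arc (namely the preimage of $V_k$ scaled by $1/|f(n)|$). Hence, given any open interval $J\subset\RR$, as soon as $n\in N(x,U_j)$ is large enough so that $1/|f(n)|<|J|$, the interval $J$ contains a full period and must meet the preimage, so $A_{j,k}\cap J\neq\emptyset$. Thus each $A_{j,k}$ is dense, and Baire's theorem gives that $A$ is residual in $\RR$.

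For the full measure statement, I would enumerate $N(x,U_j)=\{n_1<n_2<\cdots\}$ and apply Koksma's theorem to the subsequence $(f(n_i))_{i\geq 1}$, which inherits the $\delta$-separation property. Koksma yields that $(f(n_i)\theta)_i$ is uniformly distributed modulo $1$ for almost every $\theta\in\RR$; for any such $\theta$, the set $\{i:e^{2\pi i f(n_i)\theta}\in V_k\}$ has positive asymptotic density equal to the normalised length of $V_k$, hence is nonempty, so $\theta\in A_{j,k}$. A countable intersection over $(j,k)$ then gives $|\RR\setminus A|=0$. The only point of substance is recognising that the orbit-selected subsequence $(f(n))_{n\in N(x,U_j)}$ simultaneously preserves the separation hypothesis (so Koksma still applies) and the unboundedness of the values (so the period-meets-$J$ density argument still applies); once this observation is made, no real obstacle remains.
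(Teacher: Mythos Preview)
Your proof is correct and follows essentially the same architecture as the paper's: the same Baire decomposition $A=\bigcap\bigcup(\cdots)$ indexed by countable bases of $X$ and $\TT$, and the same key use of Koksma's theorem applied to the orbit-selected subsequence $(f(n))_{n\in N(x,U_j)}$ to secure full measure of each $A_{j,k}$. The one genuine difference lies in the density step: the paper simply reuses Koksma (the set of $\theta$ for which $(f(n_i)\theta)$ is uniformly distributed has full measure, hence meets any prescribed interval $(\alpha-\epsilon,\alpha+\epsilon)$), whereas you give a direct elementary argument via the $1/|f(n)|$-periodicity of $\{\theta:e^{2\pi i f(n)\theta}\in V_k\}$ combined with the observation that the separation hypothesis forces $|f(n)|\to\infty$. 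Your route is arguably cleaner in that it does not invoke a measure-theoretic theorem to establish a purely topological density claim; the paper's route is marginally shorter to write since Koksma is already on hand for the measure part.
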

\begin{proof}
Take $x\in HC(T)$. Let $\{ x_j;\ j\in \mathbb{N} \}$, $\{ t_l;l\in \mathbb{N}\}$ be dense subsets of $X$ and
$\mathbb{T}$ respectively and define $A:=\bigcap_{j,l,s \in \mathbb{N}}\bigcup_{n\in \mathbb{N}}A_{j,l,s,n}$, where
$$A_{j,l,s,n}:=\{ \theta \in \mathbb{R};\ | e^{2\pi i f(n)\theta }-t_l|<1/s \,\,\, \textrm{and} \,\,\,
\| T^nx-x_j\| <1/s \} ,$$ for $j,l,s,n\in \mathbb{N}$. Clearly, for every $\theta \in A$ the set $\{
(T^nx,e^{2\pi i f(n)\theta });\ n\geq 1\}$ is dense in $ X\times \mathbb{T}$. Let us first prove that $A$ is
residual in $\mathbb{R}$. It is easy to see that $A_{j,l,s,n}$ is open in $\mathbb{R}$ and by Baire's category
theorem it remains to show that for every $j,l,s\in \mathbb{N}$ the set $\cup_{n\in \mathbb{N}}A_{j,l,s,n}$ is
dense in $\mathbb{R}$. To this end, fix $j,l,s\in \mathbb{N}$ and let $\alpha \in \mathbb{R}$, $\epsilon >0$.
Consider the following set of positive integers $ \{ n_1<n_2<\cdots \} :=\{ n\in \mathbb{N};\ \| T^nx-x_j\| <1/s
\}$. By Theorem \ref{Koksma} the sequence $(f(n_k)\theta )$ is uniformly distributed for almost every $\theta
\in \mathbb{R}$. In particular, there exists $\theta \in \mathbb{R}$ with $|\theta -\alpha |<\epsilon $ such
that the set $\{ e^{2\pi i f(n_k)\theta };\ k\in \mathbb{N} \}$ is dense in $\mathbb{T}$. Thus, $|\alpha-\theta
|<\epsilon $, $| e^{2\pi i f(n_m)\theta } -t_l|<1/s$ and $\| T^{n_m}x-x_j\| <1/s$ for some positive integer $m$.
This shows the density result. In the above, it is implicit that for $j,l,s\in \mathbb{N}$ the set $\bigcup_{n\in
\mathbb{N}}A_{j,l,s,n}$ has full measure. Hence, $A$ has full measure.
\end{proof}

 In view of the above, one
may be tempted to ask the following question.

\medskip

{\bf Question.} Let $X$ be a Banach space, let $T\in \LX $ be hypercyclic and let $x\in HC(T)$. Consider a sequence of real
numbers $(f(n))$ such that the sequence $(f(n)\theta )$ is uniformly distributed for some $\theta \in
\mathbb{R}$. Is it true that $\{ e^{2\pi if(n)}T^nx;\ n\geq 1\}$ is dense in $X$?

\medskip

Although in many cases the above question admits an affirmative reply, for instance when $f(n)$ is a polynomial
in $n$, the answer in general is no! To see this, fix a hypercyclic vector $x\in \ell^2(\mathbb Z_+)$ for
$2B\in \mathfrak L (\ell^2(\mathbb Z_+))$, where $B$ is the unweighted backward shift. By Proposition
\ref{geomrate} there exists $\theta '\in \mathbb{R}$ such that the set $\big\{ e^{2\pi i2^n\theta '}(2B)^n x;
n\geq 1\big\} $ is not dense in $\ell^2(\mathbb Z_+)$. Define now $f(n)=2^n\theta '$. Now, on the one hand, by
Theorem \ref{Koksma} there exists $\theta \in \mathbb{R}$ such that the sequence $(f(n)\theta )$ is uniformly
distributed and, on the other hand, $\big\{ e^{2\pi if(n)}(2B)^n x;\ n\geq 1\big\} $ is not dense in
$\ell^2(\mathbb Z_+)$. Regarding the previous question it is important to note that there are sequences of real
numbers $(f(n))$ such that for no $\theta \in \mathbb{R}$ the sequence $( f(n) \theta )$ is uniformly
distributed and yet $\overline{ \{ e^{2\pi if(n)}T^nx;\ n\geq 1\} }=X$ for every $x\in HC(T)$. This is the case
for the sequence $f(n)=\log (n)$, $n=1,2,\ldots $. Indeed, this follows by Theorem \ref{THMSLOWGROWTH} and
the fact that for every $\theta \in \mathbb{R}$ the sequence $( \log (n)\theta)$ is not uniformly distributed,
see Examples 2.4, 2.5 in Chapter 1 of \cite{KN}. It is clear now that the right question along this line is the
following.
\medskip

{\bf Question.} Let $X$ be a Banach space, let $T\in \LX $ be hypercyclic and let $x\in HC(T)$. Consider a
sequence of real numbers $(f(n))$ which is uniformly distributed. Is it true that the set $\{ e^{2\pi
if(n)}T^nx;\ n\geq 1\}$ is dense in $X$?

\medskip
In turn, the answer to this question is negative.
\begin{proposition}
 Let $T=2B$ acting on $X=\ell^2(\mathbb Z_+)$. There exists a uniformly distributed sequence of real numbers $(f(n))$
and a vector $x\in HC(T)$ such that $\{e^{2\pi i f(n)}T^n x;\ n\geq 1\}$ is not dense in $X$.
\end{proposition}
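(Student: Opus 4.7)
The strategy is to construct $x\in HC(2B)$ whose orbit visits a neighborhood of $e_0$ only on a very sparse set of times, and then to define a uniformly distributed $(f(n))$ that still avoids a neighborhood of $e_0$ by exploiting the freedom of choice on a density-zero set. Concretely, I will arrange that
$$A := \{n\geq 1 : \|T^n x\|\geq 1/2\}$$
has density zero. For $n\notin A$ one automatically has $\|e^{2\pi i f(n)}T^n x - e_0\|\geq 1-\|T^n x\|>1/2$, whatever $f(n)$. For $n\in A$, writing $a_n := \langle T^n x,e_0\rangle$, I select $f(n)\in[0,1)$ so that $e^{2\pi i f(n)}a_n\leq 0$; then $|e^{2\pi i f(n)}a_n - 1|\geq 1$, hence $\|e^{2\pi i f(n)}T^n x - e_0\|\geq 1$.

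To build $x$, let $(y_k)$ be a dense sequence in $\ell^2(\mathbb Z_+)$ of finitely supported vectors with $\|y_k\|\leq 1$ and $\mathrm{supp}(y_k)\subset\{0,\dots,N_k-1\}$. Pick integers $m_1<m_2<\dots$ with $m_{k+1}-m_k\geq N_k + k$ (to be refined) and set
$$x = \sum_{k\geq 1} 2^{-m_k} S^{m_k} y_k,$$
where $S$ is the forward shift. Expanding $T^n x = \sum_k 2^{n-m_k}B^n S^{m_k}y_k$ and using the gap condition (so that $m_k - m_j\geq N_j$ for all $j<k$) yields $T^{m_k}x = y_k + r_k$ with $\|r_k\|\to 0$, giving $x\in HC(2B)$. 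Moreover, for $m_k\leq n<m_{k+1}$ the gap kills every contribution with $j<k$, and the tail $\sum_{j>k}2^{n-m_j}$ is bounded by $2\cdot 2^{n-m_{k+1}}$. Hence $A$ is contained in the union over $k$ of a window of length $O(N_k)$ around $m_k$, so $|A\cap[1,m_k]|\leq \sum_{j\leq k}(N_j + C)$. Choosing $(m_k)$ to grow much faster than $\sum_{j\leq k}N_j$ (e.g.\ $m_k \geq k\sum_{j\leq k}N_j$) makes $A$ density-zero.

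Now fix an irrational $\alpha$ and set $f(n)=n\alpha$ for $n\notin A$, and let $f(n)$ be as in the first paragraph for $n\in A$. By Weyl, $(n\alpha)$ is uniformly distributed mod $1$; since $A$ has density zero, for every continuous $1$-periodic $g$,
$$\left|\frac{1}{N}\sum_{n=1}^N g(f(n))-\frac{1}{N}\sum_{n=1}^N g(n\alpha)\right|\leq\frac{2\|g\|_\infty\,|A\cap[1,N]|}{N}\longrightarrow 0,$$
so $(f(n))$ is uniformly distributed mod $1$. By construction $e_0$ stays at distance at least $1/2$ from every $e^{2\pi i f(n)}T^n x$, hence $\{e^{2\pi i f(n)}T^n x;\ n\geq 1\}$ is not dense in $\ell^2(\mathbb Z_+)$.

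The main obstacle is the quantitative balance in the choice of the gaps $m_{k+1}-m_k$: they must be large enough to guarantee hypercyclicity ($\|T^{m_k}x - y_k\|\to 0$) and simultaneously to force $\|T^n x\|<1/2$ outside a density-zero set of $n$. This is made possible by the geometric character of $2B$: once $T^{m_k}x\approx y_k$, we have $T^{m_k+\ell}x\approx 2^\ell B^\ell y_k$, which vanishes as soon as $\ell\geq N_k$, so each bad window has length only $O(N_k)$, and thus only a negligible fraction of the integers.
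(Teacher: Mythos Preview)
Your approach is essentially the paper's: build $x$ as a lacunary shifted sum $\sum 2^{-m_k}S^{m_k}y_k$, observe that the orbit is ``trivial'' outside a density-zero set of times, and modify a uniformly distributed sequence only on that null set so that $e_0$ is avoided. The paper tracks the single coordinate $e_0^*(T^n x)$ (which vanishes exactly off $\bigcup_k[n_k,n_k+k]$) rather than the full norm $\|T^n x\|$, which makes the bad set explicit without any tail estimate; your norm version requires a short computation but reaches the same conclusion.

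There is one slip to fix: you ask that $(y_k)$ be dense in $\ell^2(\mathbb Z_+)$ \emph{and} that $\|y_k\|\leq 1$, which is impossible. Take instead $\|y_k\|\leq k$ (as the paper does). The tail bounds then acquire harmless factors of $j$: one gets $\|T^{m_k}x-y_k\|\leq\sum_{j>k}j\,2^{m_k-m_j}\to 0$ for hypercyclicity, and for $m_k+N_k\le n<m_{k+1}$ one has $\|T^n x\|\le\sum_{j>k}j\,2^{n-m_j}$, which is $<1/2$ once $n<m_{k+1}-O(\log k)$. Thus $A\cap[m_k,m_{k+1})$ still has size $O(N_k+\log k)$, and your density-zero argument goes through after enlarging the gaps $m_{k+1}-m_k$ accordingly. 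With this correction the proof is complete.
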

\begin{proof}
 We start from any uniformly distributed sequence $(g(n))$. The idea of the proof is to change slightly
the sequence $(g(n))$ to a sequence $(f(n))$ which remains uniformly distributed and such that $e^{2\pi if(n)}$ can be arbitrarily
chosen for $n$ belonging to some subset $A\subset \mathbb N$ containing arbitrarily large intervals.
We define simultaneously $x\in HC(T)$ such that
$$\left\{\begin{array}{rcll}
          e_0^*(T^n x)&=&0&\textrm{ provided }n\notin A\\
\Re e\big(e_0^*(e^{2\pi if(n)}T^n x)\big)&\geq&0&\textrm{ provided }n\in A.
         \end{array}
\right.$$
Hence, $\{e^{2\pi if(n)}T^n x;\ n\geq 1\}$ will not be dense in $X$.

We now proceed with the details. Let $(n_k)$ be an increasing sequence of integers such that
$n_1=1$, $n_{k+1}>n_k+(k+1)$ for any $k\geq 1$ and $k^2/n_k\to 0$. Let $(x_k)$ be a dense sequence in $\ell^2(\mathbb Z_+)$
such that, for any $k\geq 1$, $\|x_k\|\leq k$ and $x_k$ has finite support contained in $[0,k]$. We set
$$x=\sum_{j\geq 1}\frac1{2^{n_j}}S^{n_j}x_j$$
and we observe that $x\in HC(T)$. Indeed,
$$\|T^{n_k}x-x_k\|=\left\|\sum_{j>k}\frac1{2^{n_j-n_k}}S^{n_j-n_k}x_j\right\|\leq\sum_{j>k}\frac j{2^j}\xrightarrow{k\to+\infty}0.$$
We then define $(f(n))$ by
\begin{itemize}
 \item $f(n)=g(n)$ provided $n\notin\bigcup_{k\geq 1}[n_k,n_k+k]$;
\item $f(n)$ is any positive real number such that $\Re e\big(e_0^*(e^{2\pi if(n)}T^n x)\big)\geq0$
provided $n\in \bigcup_{k\geq 1}[n_k,\ n_k+k]$.
\end{itemize}
As already observed, $\{e^{2\pi if(n)}T^n x;\ n\geq 1\}$ cannot be dense in $\ell^2(\mathbb Z_+)$. Thus it remains to
show that $(f(n))$ is uniformly distributed. Let $I$ be a subarc of $\TT$ and let $n\geq 1$. Let $k\geq 1$ be
such that $n_k\leq n<n_{k+1}$. Then, since $f(j)$ and $g(j)$ may only differ for $j\leq n$ if $j$ belongs to
$\bigcup_{l=1}^k[n_l,n_l+l]$ which has cardinality less than $(k+1)^2$,
$$-\frac{(k+1)^2}n+\frac1n\textrm{card}\left(\left\{1\leq j\leq n;\ e^{2\pi ig(j)}\in I\right\}\right)\leq \frac1n\textrm{card}\left(\left\{1\leq j\leq n;\ e^{2\pi if(j)}\in I\right\}\right)$$
and
$$\frac1n\textrm{card}\left(\left\{1\leq j\leq n;\ e^{2\pi if(j)}\in I\right\}\right)\leq \frac1n\textrm{card}\left(\left\{1\leq j\leq n;\ e^{2\pi ig(j)}\in I\right\}\right)+\frac{(k+1)^2}n.$$
Since $k^2/n\leq k^2/n_k$ goes to zero, $(f(n))$ remains uniformly distributed.
\end{proof}

\medskip

We conclude the paper with a generic result which is related to the question we asked in the introduction. The space
$\mathbb{T}^{\mathbb{N}}$ is endowed with the metric $d(\Lambda
,M)=\sum_{n=1}^{+\infty}\frac{|\lambda_n-\mu_n|}{2^n}$ for $\Lambda=(\lambda_n)\in \mathbb{T}^{\mathbb{N}}$,
$M=(\mu_n)\in \mathbb{T}^{\mathbb{N}}$, and becomes a complete metric space.
\begin{proposition} \label{prop2generic}
Let $X$ be a Banach space and let $T\in \LX$ be a hypercyclic operator. Then for every $x\in HC(T)$ there exists
a residual subset $B$ of $\mathbb{T}^{\mathbb{N}}$ such that for every $(\lambda_1, \lambda_2,\ldots )\in B$ the
set $\{ \lambda_nT^nx;\ n\geq 1\}$ is dense in $X$.
\end{proposition}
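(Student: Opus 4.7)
The plan is to apply Baire's theorem in the complete metric space $(\TT^{\NN},d)$. Fix $x\in HC(T)$ and a countable basis $(U_j)_{j\geq 1}$ of open sets of $X$. For each $j$, I would set
$$B_j := \big\{(\lambda_n)\in \TT^{\NN} : \exists\, n\geq 1,\ \lambda_n T^n x\in U_j\big\}.$$
By construction, a sequence $(\lambda_n)$ lies in $\bigcap_{j\geq 1} B_j$ if and only if $\{\lambda_n T^n x : n\geq 1\}$ meets every basic open set, i.e.\ is dense in $X$. So the task reduces to showing that each $B_j$ is open and dense in $\TT^{\NN}$, after which $B := \bigcap_j B_j$ is residual by the Baire category theorem and does the job.

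For openness, if $(\lambda_n)\in B_j$ with witness index $n$, then continuity of $\mu\mapsto \mu T^n x$ from $\TT$ to $X$ gives an open neighbourhood $V\subset \TT$ of $\lambda_n$ with $\mu T^n x\in U_j$ for all $\mu\in V$. The cylinder $\{(\mu_k)\in\TT^\NN : \mu_n\in V\}$ is then an open neighbourhood of $(\lambda_n)$ in $\TT^{\NN}$ contained in $B_j$. For density, fix $(\lambda_n)\in\TT^{\NN}$ and $\veps>0$, and choose $N$ with $2^{-N}<\veps$. Since $x\in HC(T)$, the tail $\{T^n x : n>N\}$ is still dense in $X$, so one can pick some $n>N$ with $T^n x\in U_j$. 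Define $(\mu_k)$ by $\mu_k=\lambda_k$ for $k\neq n$ and $\mu_n=1$; then $\mu_n T^n x = T^n x\in U_j$, so $(\mu_k)\in B_j$, while
$$d\big((\lambda_k),(\mu_k)\big) = 2^{-n}|\lambda_n-1|\leq 2^{-n+1}\leq 2^{-N} < \veps.$$

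I do not expect any real obstacle here: the only nontrivial ingredient is the density of the single orbit $\{T^n x\}$ (which is exactly $x\in HC(T)$), and the product metric on $\TT^{\NN}$ ensures that altering one coordinate at a sufficiently late index has arbitrarily small effect on $d$. In particular, the argument bypasses the Le\'on--M\"uller theorem entirely, and is considerably softer than the polynomial-phase results of Sections 2--4; the residual set $B$ depends on $x$, but nothing in the proof demands more than a single hypercyclic vector.
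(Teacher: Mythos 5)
Your proof is correct and follows essentially the same route as the paper's: both apply Baire's theorem in $(\mathbb{T}^{\mathbb{N}},d)$, express the target set as a countable intersection of open sets (you via a countable basis $(U_j)$, the paper via a dense sequence and $1/s$-balls, which is an inessential difference), and establish density by exploiting that a tail of the $T$-orbit of $x$ remains dense and that changing coordinates past a large index $N$ moves a point in $\mathbb{T}^{\mathbb{N}}$ by less than $\varepsilon$. The only cosmetic deviation is that you alter a single coordinate while the paper resets the whole tail to $1$; both computations give the same bound.
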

\begin{proof}
Fix $x\in HC(T)$. Let $\{ x_j;\ j\in \mathbb{N}\}$ be a dense set in $X$ and define the set $B_{j,s,n}:=\{
\Lambda=(\lambda_m)\in \mathbb{T}^{\mathbb{N}};\ \| \lambda_nT^nx-x_j\| <1/s\}$ for $j,s,n\in \mathbb{N}$. It is
straightforward to check that $B_{j,s,n}$ is open in $\mathbb{T}^{\mathbb{N}}$ for every $j,s,n\in \mathbb{N}$.
We then define $B:=\bigcap_{j,s \in \mathbb{N}}\bigcup_{n\in \mathbb{N}}B_{j,s,n}$. Observe that if $\Lambda
=(\lambda_n)\in B$ then the set $\{ \lambda_nT^nx;\ n\geq 1\}$ is dense in $X$. Hence, in view of Baire's
category theorem it suffices to show that for every $j,s\in \mathbb{N}$ the set $\bigcup_{n\in
\mathbb{N}}B_{j,s,n}$ is dense in $\mathbb{T}^{\mathbb{N}}$. Fix $j,s\in \mathbb{N}$ and let $M=(\mu_n)\in
\mathbb{T}^{\mathbb{N}}$, $\epsilon >0$. There exists a positive integer $N$ such that
$\sum_{n=N+1}^{+\infty}\frac{2}{2^n}<\epsilon$. Define the vector $\Lambda:=(\mu_1, \ldots, \mu_N, 1,1,1, \ldots
)\in \mathbb{T}^{\mathbb{N}}$. From the above we get $d(\Lambda ,M)<\epsilon$ and $\| \lambda_nT^nx-x_j\| =\|
T^nx-x_j\| <1/s$ for some $n\in \mathbb{N}$ with $n >N$. Hence, the set $\bigcup_{n\in \mathbb{N}}B_{j,s,n}$ is
dense in $\mathbb{T}^{\mathbb{N}}$.
\end{proof}


\begin{thebibliography}{99}
\bibitem{A} S. I. Ansari, Hypercyclic and cyclic vectors, J. Funct. Anal. 128 (1995), 374-383.

\bibitem{BG} F. Bayart and S. Grivaux, Frequently hypercyclic operators, Trans. Amer. Math. Soc. 358 (2006),
5083-5117

\bibitem{BM}  F. Bayart and \'E. Matheron, Dynamics of linear operators, Cambridge Tracts in Math. 179, Cambridge Univ. Press, 2009.

\bibitem{BF} P. S. Bourdon and N. S. Feldman, Somewhere dense orbits are everywhere dense, Indiana Univ. Math. J. 52
(2003), 811-819.

\bibitem{CMP} A. Conejero, V. M\"{u}ller and A. Peris, Hypercyclic behaviour of operators in a hypercyclic $C_0$-semigroup, J. Funct. Anal., 244 (2007), 342-348.

\bibitem{CH1} G. Costakis and D. Hadjiloucas, Somewhere dense Ces\`{a}ro orbits and rotations of Ces\`{a}ro hypercyclic
operators. Studia Math. 175 (2006), 249-269.

\bibitem{CH2} G. Costakis and D. Hadjiloucas, The hypercyclicity criterion and hypercyclic sequences of multiples
of operators. J. Operator Theory 62 (2009), 341-355.

\bibitem{GP} K. Grosse-Erdmann and A. Peris, Linear Chaos, Universitext, 2011, Springer.

\bibitem{HR} E. Hewitt and K. Ross, Abstract Harmonic Analysis, vol. I, Springer, Berlin, 1979.

\bibitem{KN} L. Kuipers and H. Niederreiter, Uniform distribution of sequences.
Pure and Applied Mathematics. Wiley-Interscience, [John Wiley and Sons], New York-London-Sydney, 1974.

\bibitem{L}  F. Le\'{o}n-Saavedra, Operators with hypercyclic Ces\`{a}ro means. Studia Math. 152 (2002), 201-215.

\bibitem{LM} F. Le\'{o}n-Saavedra and V. M\"{u}ller, Rotations of hypercyclic and supercyclic operators,
Integral Equations Operator Theory 50 (2004), 385-391.

\bibitem{LP} F. Le\'{o}n-Saavedra and A. Piqueras-Lerena, On weak positive supercyclicity, Israel J. Math. 167 (2008),
303-313.

\bibitem{M} \'E. Matheron, Subsemigroups of transitive semigroups, Ergodic Theory Dynam. Systems 32 (2012), 1043-1071.

\bibitem{P} A. Peris, Multi-hypercyclic operators are hypercyclic, Math. Z. 236 (2001), 779-786.

\bibitem{S} S. Shkarin, Universal elements for non-linear operators and their applications,
J. Math. Anal. Appl. 348 (2008), 193-210.
\end{thebibliography}
\end{document}